\documentclass[12]{article}

\usepackage{amsmath,amssymb,latexsym,amsthm}
\newtheorem{theorem}{Theorem}
\usepackage[bottom]{footmisc}
\newtheorem{lemma}[theorem]{Lemma}
\newtheorem{proposition}[theorem]{Proposition}
\newtheorem{definition}[theorem]{Definition}
\newtheorem{remark}[theorem]{Remark}
\newtheorem{example}[theorem]{Example}

\usepackage{booktabs}
\def\bn{\bigskip\noindent}
\def\R{{\mathbb R}}
\def\Q{{\mathbb Q}}

\usepackage{amsmath,amssymb,latexsym}

\title{The icosahedra of edge length 1}

\author{Karl-Heinz Brakhage\footnotemark[1],
Alice C. Niemeyer\footnotemark[2],
Wilhelm Plesken\footnotemark[2],\\
Daniel Robertz\footnotemark[3],
Ansgar Strzelczyk\footnotemark[2]}

\footnotetext[1]
{Institut f\"ur Geometrie und Praktische Mathematik, RWTH Aachen University, Templergraben 55, 52062 Aachen, Germany.
{\tt\small brakhage@igpm.rwth-aachen.de}}
\footnotetext[2]
{Lehrstuhl B f\"ur Mathematik, RWTH Aachen University, Pontdriesch \mbox{10-16}, 52062 Aachen, Germany.
{\tt\small Alice.Niemeyer@mathb.rwth-aachen.de},
{\tt\small Wilhelm.Plesken@mathb.rwth-aachen.de},
{\tt\small Ansgar.Strzelczyk@math.rwth-aachen.de}}
\footnotetext[3]
{School of Computing, Electronics and Mathematics, University of Plymouth, \mbox{2-5} Kirkby Place, Drake~Circus, Plymouth, Devon, PL4 8AA, United Kingdom.
{\tt\small daniel.robertz@plymouth.ac.uk}}

\begin{document}
\maketitle

  \begin{abstract}
  Retaining the combinatorial
  {\sc  Euclid}ean
structure  of  a  regular  icosahedron,  namely  the  20  equiangular
(planar) triangles, the  30 edges of length 1, and the 12  different vertices
together with the incidence structure, we 
investigate  variations of  the regular  icosahedron admitting
self-intersections of faces. We determine all rigid equivalence classes of
these icosahedra with non-trivial automorphism group and find one
curve of flexible icosahedra. Visualisations and explicit data for this paper
are available under  \\
{\tt http://algebra.data.rwth-aachen.de/Icosahedra/visualplusdata.html}.
  \end{abstract}

{\bf Keywords:}
Icosahedron, combinatorial geometry, rigidity

\section{Introduction} 
The regular icosahedron has already fascinated the ancient Greeks.  In
this paper  we investigate  variations of  the regular  icosahedron as
follows:  We  keep  the  combinatorial part  of  the  {\sc  Euclid}ean
structure  of  the  regular  icosahedron, namely  the  20  equiangular
(planar) triangles,  the 30 edges  of length  1, and the  12 different
vertices  with the  incidence  structure. We  drop  the assumption  of
convexity  and even  allow that  the triangles  penetrate each  other.
This results  into a system  of 30  quadratic equations over  the real
numbers for  $3\cdot 12$  indeterminates. The  real solutions  with 12
different vertices  we simply call  icosahedra. Having tried  to solve
these equations  for some time,  we have  come to the  conclusion that
solving them is a hard problem indeed.

\bn  We dedicate  this  paper  to the  late  Charles  Sims.  His  main
contribution to mathematics  was in group theory,  where, for example,
his  contributions  to  group  theoretic  algorithms  were  a  crucial
ingredient $\,$ in the $\,$ quest $\,$for$\,$ finite sporadic  \newpage 
\noindent simple groups.  Inspired
by his perseverance of tackling hard  problems we too have not given up
and, using group theory, have at  least come close to a classification
of all icosahedra allowing a non-trivial symmetry group.

\bn To be more specific, we may and do assume that the edge lengths of
the triangles  are all 1.  To get rid  of the translations  of 3-space
moving around  the icosahedra, we assume  that the vertices sum  up to
$0$, i.e.  the coordinate origin is  the center of mass  of the twelve
(equilibrated) vertices. Hence  an icosahedron is given  by a $3\times
12$-matrix $M$ whose  columns $V_1, \ldots, V_{12}$  give the standard
coordinates for the 12 vertices.  Having numbered the vertices in some
fixed way these icosahedra still allow the operation of the orthogonal
group of {\sc Euclid}ean 3-space: With $M$ also $OM$ is a solution for
every $O  \in O_3(\R )$.  Finally to get rid  of this group  action as
well, we pass over to the {\sc Gram}-matrix $G:=M^{tr} \cdot M$, which
is a $12  \times 12$-matrix with the following  obvious properties: It
is real, symmetric, positive semidefinite of rank at most 3. Its three
non-negative  eigenvalues  give some  idea  of  how the  vertices  are
distributed  around  their  center  of   mass.  Also  some  choice  of
eigenvectors  for the  three non-negative  eigenvalues yields  a normalized
choice for  the coordinate  matrix $M$ of  the icosahedron.  Since the
action of the orthogonal group has been factored out by the passage to
the {\sc Gram}-matrices, equivalence,  isometry, or isomorphism of the
latter is just conjugacy by permutation matrices.

\bn It is well known that  the combinatorial automorphism group $A$ of
the icosahedron  has order 120  and is isomorphic to  $C_2\times A_5$,
where the generator $d$ of the center $Z(A) = C_2$ is generated by the
permutation  interchanging combinatorially  opposite  vertices of  the
icosahedron. Here is a summary of our results.

\begin{theorem} \label{Uebersicht}
The subgroups  $U$ of  $A$ with  more than one  element that  arise as
symmetry group  of an  icosahedron fall into  11 conjugacy  classes of
subgroups  of $A$.  For each  $U$ we  list the  number of  equivalence
classes of icosahedra.
  \[
\begin{array}{|l|c|}
\hline
\mbox{Automorphism group} &\mbox{Number of}\\
U\leq A=C_2\times A_5 &\mbox{icosahedra}\\
\hline 
 C_2 \times A_5 & 2\\
 \hline 
 C_2 \times D_{10}& 4\\
 \hline 
 C_2 \times D_{6}& 2\\
 \hline 
 D_{10} \quad (\not\leq  A_5)& 3\\
 \hline 
 D_6 \quad  (\not\leq  A_5)& 2\\
 \hline 
 {C_2}^2 \quad (\ni d) & 1\\
 \hline 
 {C_2}^2 \quad (\not\ni d, \not\leq A_5) & 5\\
 \hline 
 {C_2}^2 \quad (\leq A_5) & 1\\
 \hline 
 C_2 \quad (\leq A_5)& 5\\
 \hline 
 C_2 \quad (\not\ni d, \not\leq A_5) & 10\\
 \hline 
 C_2 \quad (=\langle d \rangle)& \infty\\
 \hline
\end{array}
\]
\end{theorem}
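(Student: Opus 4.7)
\medskip\noindent\emph{Proof plan.} The proof would proceed by running through the conjugacy classes of non-trivial subgroups of $A = C_2 \times A_5$. These are finite in number and, by Goursat's lemma together with the known subgroup lattice of $A_5$, can be listed explicitly; the claim to be established is that exactly the 11 classes above are the ones actually realised as symmetry groups of some icosahedron.

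For each representative $U$, I would carry out roughly seven steps. First, compute the orbits of $U$ on the 12 vertices and on the $\binom{12}{2}=66$ unordered vertex pairs. Second, parameterise the $U$-invariant Gram matrices by introducing one indeterminate per pair-orbit (including the diagonal orbits); this gives an affine space whose dimension shrinks rapidly as $|U|$ grows. Third, impose the 30 edge-length equations $G_{ii}-2G_{ij}+G_{jj}=1$; because $U$ preserves the edge set these collapse to one equation per $U$-orbit on edges. Fourth, impose the rank-at-most-3 and positive semidefiniteness conditions, e.g.\ by requiring all $4\times 4$ minors of $G$ to vanish together with a sign condition on the leading principal minors. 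Fifth, solve the resulting polynomial system in the (now few) remaining variables, using Gr\"obner basis elimination where necessary. Sixth, discard solutions in which two columns of the coordinate matrix $M$ coincide, so that the 12 vertices are genuinely distinct. Seventh, quotient the remaining solution set by the action of the normaliser $N_A(U)/U$, and move any solution whose true stabiliser in $A$ strictly contains $U$ to its proper row of the table.

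The main obstacle lies at the bottom of the table. For $U = \langle d \rangle$ the parameter space is largest, and the resulting variety is not zero-dimensional: there is a genuine one-parameter curve of flexible icosahedra, producing the $\infty$ entry. Exhibiting this curve and ruling out any further components demands a real algebraic-geometric analysis rather than pure enumeration. For $U = C_2^2$ and the two further $C_2$ rows the systems remain sizable, and the delicate bookkeeping consists of recognising when a seemingly $U$-symmetric solution in fact has larger symmetry and so belongs to an earlier row. By contrast the top rows, where $|U|$ is large, leave so few free parameters after invariance that the classification can essentially be done by hand, and the small integers in the right-hand column are the final output of this case-by-case elimination.
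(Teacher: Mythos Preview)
Your outline is sound in principle but differs from the paper's route in two organizational respects that, by the authors' own account, are what make the computation feasible. First, the paper does not iterate over all conjugacy classes of subgroups. It treats only the \emph{minimal} non-trivial subgroups (those of prime order), computes all Gram matrices fixed by each, and then determines the full stabiliser in $A$ of every resulting matrix, which automatically places each icosahedron in its correct row. A short lemma shows that any Gram matrix fixed by an element of order $3$ or $5$ is already fixed by an element of order $2$, reducing everything to the three involution classes $\langle a\rangle$, $\langle d\rangle$, $\langle ad\rangle$. Second, the paper does not work with $U$-invariant Gram matrices constrained by $4\times 4$ minors. It passes to \emph{coordinate matrices} $M\in\R^{3\times 12}$ intertwining the permutation action with a fixed faithful orthogonal representation $\delta:U\to O_3(\R)$; this makes the rank condition automatic and introduces a further split, since each $C_2$ has three inequivalent such $\delta$'s (generator $\mapsto \mathrm{diag}(1,-1,-1)$, $\mathrm{diag}(-1,1,1)$, or $-I$). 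Of the resulting $3\times 3=9$ subcases several are empty by easy arguments, and the rest become tractable Gr\"obner-basis problems, though even then the largest ones involve residue fields of degree up to $172$.

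For the $\infty$ entry at $U=\langle d\rangle$, the paper does not attempt a real-algebraic description of the curve: it builds a non-vanishing polynomial vector field tangent to the one-dimensional variety and invokes local existence for analytic ODEs to exhibit a non-constant arc of genuine icosahedra. Your plan is not wrong, but as written it omits precisely the reduction to minimal subgroups and the splitting by linear action on $3$-space that carry the weight of the argument; the direct Gram-matrix-with-minors approach has substantially more variables and relations, and the paper explicitly flags these reductions as essential.
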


\bn  For the  finite  cases we  shall produce  formulas  for the  {\sc
  Gram}-matrices,  called formal  {\sc Gram}-matrices,  most of  which
usually contributing more than one  real {\sc Gram}-matrix.  This will
be  discussed  in  Section~\ref{sec:formalGramMatrices}.   Further  it
turns out, most cases of subgroups  $U$ split up into several subcases
distinguished by the linear action of $U$ on $3$-space. This splitting
is an essential step for  this classification. Details are discussed in
Section~\ref{SymLin},  cf.  also  the final  Table~\ref{endgue}.   The
final  case   with  infinitely  many  solutions   is  investigated  in
Section~\ref{CurvIco}.  We only  present  existence  proofs with  some
numerical  approximations. In particular we have a  simulation of  a
curve of deformable icosahedra, which is obtained by solving a
certain ordinary differential equation numerically.

The problem addressed in this paper can also be viewed as the
problem of classifying embeddings into Euclidean 3-space of the graph
that is defined by the incidence structure of the icosahedron, with
prescribed edge lengths and symmetry. The question whether continuous
families of embeddings exist is known as the question whether the considered
framework is rigid. A common approach to investigate rigidity is to check
infinitesimal rigidity and conclude in the affirmative case that the framework
is rigid for sufficiently generic embeddings. In \cite{Schulze} this approach was
adapted to frameworks with symmetry. Infinitesimal rigidity implies rigidity also
for sufficiently generic embeddings with specified symmetry.
The method of \cite{Schulze} would be an alternative to show the existence of the
curve of deformable icosahedra with symmetry, but the question whether prescribing
the edge lengths is compatible with the genericity assumption would still have to
be addressed.

As  a  consequence  of   the  easier  parts  of  our
computations we mention:

\begin{proposition}\label{einlgeomd-1}
There are exactly four isometry classes of  icosahedra $($with all edge lengths  $1)$
such that the midpoints of combinatorially opposite vertices have the center of mass of the 
icosahedron as their midpoint. 
\end{proposition}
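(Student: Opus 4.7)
The plan is to translate the midpoint hypothesis into a linear-algebraic condition and then appeal to the (easier part of the) classification. Since we have normalised so that the center of mass is the origin, the condition $\tfrac{1}{2}(V_i + V_{d(i)}) = 0$ for each pair of combinatorially opposite vertices is equivalent to $V_{d(i)} = -V_i$ for all $i$. Equivalently, the central involution $d \in Z(A)$ is realised on $3$-space by the linear map $-I$; in particular, any icosahedron satisfying the hypothesis is centrally symmetric and has $d$ in its symmetry group $U \leq A$.

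Under central symmetry the icosahedron is determined by six vectors $W_1, \dots, W_6$ (one from each antipodal pair), and the coordinate matrix reads $M = (W_1, \dots, W_6, -W_1, \dots, -W_6)$, giving the block Gram matrix $G = \begin{pmatrix} H & -H \\ -H & H \end{pmatrix}$ with $H = (W_i \cdot W_j)_{i,j}$ of rank at most $3$ and positive semi-definite. The combinatorial fact that the five non-neighbours of any vertex (other than its antipode) are the antipodes of its five neighbours implies that between any two of the six antipodal vertex pairs there are precisely two (mutually antipodal) edges; this determines a well-defined sign $\epsilon_{ij} \in \{\pm 1\}$ by the condition that $\{W_i, \epsilon_{ij} W_j\}$ is an edge. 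Writing $a_i := \|W_i\|^2$, the $30$ edge equations collapse to the $15$ linear relations $a_i + a_j - 2\epsilon_{ij}(W_i \cdot W_j) = 1$, so each off-diagonal entry of $H$ is determined by $(a_i, a_j, \epsilon_{ij})$, and the entire matrix $H$ depends only on $(a_1, \dots, a_6)$. The problem thereby reduces to a polynomial system in these six parameters (vanishing of the $4 \times 4$ minors of $H$, positive semi-definiteness, and distinctness of the twelve vertices).

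To finish, I would invoke the splitting by linear action of Section~\ref{SymLin}. Among the conjugacy classes in Theorem~\ref{Uebersicht} whose groups contain $d$, each subcase is further distinguished by how $d$ acts as an orthogonal involution on $3$-space: as $-I$, as a rotation by $\pi$, or as a reflection. Only the subcases with $d \mapsto -I$ contribute to Proposition~\ref{einlgeomd-1}; filtering the classification accordingly leaves exactly four isometry classes.

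The main obstacle lies in two places. First, one must verify that in the flexible family of the last row of Theorem~\ref{Uebersicht} (symmetry $\langle d \rangle$, infinitely many icosahedra), $d$ is in fact never realised as $-I$; otherwise the proposition's count would become infinite. Second, within each finite row with $d \in U$ one must check that the subcase $d \mapsto -I$ contributes the expected number of icosahedra, which rests on the explicit polynomial analysis with formal Gram matrices carried out in Section~\ref{sec:formalGramMatrices}.
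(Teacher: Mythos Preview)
Your translation of the midpoint hypothesis into the condition $\delta(d)=-I$ is exactly the paper's starting point, and from there the paper's proof is simply Lemma~\ref{2,3}: a direct, self-contained computation of all coordinate matrices intertwining $\pi|_{\langle d\rangle}$ with $\delta(d)=\mathrm{diag}(-1,-1,-1)$, which produces precisely two formal {\sc Gram}-matrices over $\Q[\sqrt{5}]$ and four real ones. So your approach and the paper's agree on the essential idea.

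Where you diverge is in the finishing step. You propose to filter the full classification of Theorem~\ref{Uebersicht} by the linear action of $d$, and this is what generates your two ``obstacles''. Neither actually arises in the paper's route: the nine cases of Table~\ref{cases} are disjoint by construction (they are indexed by the pair $(\mbox{generator},\delta)$), so the infinite family of Lemma~\ref{2,1} lives entirely in the case $\delta(d)=\mathrm{diag}(1,-1,-1)$ and cannot contaminate the $-I$ case; and the count of four comes out of the single computation of Lemma~\ref{2,3} without any reference to the other rows of Theorem~\ref{Uebersicht}. In short, the proposition is a corollary of one of the easiest boxes of Table~\ref{cases}, not of the whole table.

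Your block decomposition $G=\left(\begin{smallmatrix}H&-H\\-H&H\end{smallmatrix}\right)$ and the sign analysis reducing the thirty edge equations to fifteen linear constraints on the entries of $H$ in terms of the six diagonal entries $a_i$ is a genuinely nice reduction that the paper does not make explicit; it would streamline the coordinate-matrix computation in the proof of Lemma~\ref{2,3}. But you stop short of actually solving the resulting system (the rank-$3$ constraint on $H$ together with positive semidefiniteness and distinctness), so as written your proposal still defers the real work to the computation carried out there.
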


\section{Symmetry and linear action on 3-space}\label{SymLin}
The combinatorial automorphism group $A\cong C_2 \times A_5$ of the abstract icosahedron  can be generated by the permutations 
\begin{align*} 
a & = (1,2)(3,4)(5,7)(6,8)(9,11)(10,12),\\
b & = (1,10)(3,9)(2,12)(4,11)(5,6)(7,8), \\
c & = (1,7)(2,3)(4,11)(5,12)(6,8)(9,10),\\
d & = (1,12)(3,9)(2,10)(4,11)(5,7)(6,8).
\end{align*}
In  particular  $d$  generates   the  center  $C_2$  and  interchanges
combinatorially opposite vertices, whereas  the first three generators
$a,b,c$ form a  minimal generating set of involutions  for $A_5$.  The
triangles are obtained  as the orbit of $\{ 1,2,3\}$,  the 30 edges as
orbit of $\{  1,2\}$, the 30 diagonals of combinatorial  distance 2 as
the orbit of $\{ 3,4\}$, and  finally the 6 diagonals of combinatorial
distance 3 as  the orbit of $\{  1,12\}$, which one also  finds in the
2-cycles  of the  generator  $d$ of  the  center of  $A$.  There is  a
geometric  bijection called  ``orthogonal diagonal''  between the  two
orbits $\{ 1,2\}^A$ and $\{ 3,4\}^A$ as follows
\[
 \omega : \{ 1,2\}^A \to  \{ 3,4\}^A: \{ i,j\} \mapsto \{ s,t\} \mbox{ iff } \{ i,j,s \} , \{i,j,t \} \in \{1,2,3\}^A.
\]
Finally let
$\pi : A \to {\rm GL}(12, \R)$ denote the natural representation of $A$ by permutation matrices.

\begin{definition}
The combinatorial automorphism group $A$ acts on the set ${\cal G}$
of all {\sc Gram}-matrices of icosahedra by
$$A \times {\cal G}: (g,
G) \mapsto \pi (g)^{tr}\cdot G\cdot\pi (g)= (G_{ig,jg})_{1\leq i,j\leq
  12}$$
and the  stabilizer of $G \in  {\cal G}$ is the  automorphism group of
$G$ or  the associated icosahedron. Note,  $\pi (g)^{tr}=\pi (g)^{-1}$
so that the trace is an invariant for this action.
\end{definition}

\bn
We now demonstrate 
a method splitting up the case of an automorphism group $U$ into more manageable
subcases by using the linear action of $U$ on 3-space. To this end
we have the following elementary lemma from linear algebra:

\begin{lemma}
Let $G\in \R^{n \times n}$ be a symmetric, positive semidefinite matrix of rank $k$.\\
1.) There exists a real $k\times n$-matrix $M$ with $M^{tr}M=G$.\\
2.) $G$ and $M$ have the same row space.\\
3.) If $L \in \R^{k\times n}$ also satisfies  $L^{tr}L=G$, then there is
a unique matrix $g \in \R^{k \times k}$ with $L=gM$. Moreover $g$ is orthogonal.\\
\end{lemma}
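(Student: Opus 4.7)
The plan is to establish the three items in order, using the spectral theorem and a little elementary linear algebra.

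For part (1), I would invoke the spectral theorem: $G = Q^{tr}DQ$ with $Q \in O_n(\R)$ and $D$ diagonal. Positive semidefiniteness together with $\mathrm{rank}(G)=k$ forces exactly $k$ eigenvalues to be strictly positive and the remaining $n-k$ to vanish. After permuting the columns of $Q$ one may write $D = \mathrm{diag}(\lambda_1,\ldots,\lambda_k,0,\ldots,0)$ with $\lambda_i > 0$, and then take $M$ to be the $k \times n$ matrix whose $i$-th row is $\sqrt{\lambda_i}$ times the $i$-th row of $Q$; a direct computation then gives $M^{tr}M = G$.

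For part (2), the identity $G = M^{tr}M$ exhibits every row of $G$ as an explicit linear combination of the rows of $M$, so the row space of $G$ is contained in that of $M$. The reverse inclusion follows for dimension reasons: from $\mathrm{rank}(M^{tr}M) \le \mathrm{rank}(M) \le k = \mathrm{rank}(G)$ one concludes $\mathrm{rank}(M) = k$, so the two row spaces have the same dimension and hence agree.

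For part (3), full row rank of $M$ makes the $k\times k$-matrix $MM^{tr}$ invertible (indeed positive definite), so I would set $g := L M^{tr} (MM^{tr})^{-1}$. Two checks are needed. First, $gM = L$: the matrix $M^{tr}(MM^{tr})^{-1} M$ is the orthogonal projection of $\R^n$ onto the row space of $M$, and since part (2) applies equally to $L$, each row of $L$ already lies in that subspace. Second, $g^{tr} g = I_k$: expanding
\[
g^{tr} g = (MM^{tr})^{-1} M\, L^{tr}L\, M^{tr} (MM^{tr})^{-1}
\]
and substituting $L^{tr}L = G = M^{tr}M$ causes the factors of $MM^{tr}$ and $(MM^{tr})^{-1}$ to telescope to the identity. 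Uniqueness is immediate: if $g_1 M = g_2 M$, right-multiplication by $M^{tr}$ yields $(g_1 - g_2) MM^{tr} = 0$, and invertibility of $MM^{tr}$ forces $g_1 = g_2$.

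No individual step looks particularly delicate; the subtlest point is really the row-space identification in (2), since it is exactly what guarantees in (3) that the rows of $L$ are fixed by the projection onto the row space of $M$, and therefore that $gM$ equals $L$ and not merely its orthogonal projection.
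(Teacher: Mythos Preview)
Your proof is correct and follows essentially the same path as the paper: spectral decomposition for (1), rank considerations for (2), and the row-space identification to get (3). The paper is terser---it dismisses (2) as ``Obvious'' and (3) as ``Clear from 2.)''---whereas you supply the details, including the explicit formula $g = L M^{tr}(MM^{tr})^{-1}$ and the telescoping verification of orthogonality; this is a virtue, not a departure.
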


\begin{proof}
 1.) Let $(E_1, \ldots, E_{n})$ be an orthonormal basis of eigenrows of $G$ with eigenvalues
 $\lambda_1>0,\ldots, \lambda_k>0, \lambda_{k+1}= \ldots =\lambda_{n}=0$. Then $M$ can be chosen
 with the rows $\frac{1}{\sqrt{\lambda_i}} E_i$ for $i=1,\ldots,k$,
 because $E_i^{tr}E_i$ represents the
 orthogonal projection of $\R^{1\times n}$ onto $\langle E_i\rangle$.\\
 2.) Obvious. 3.) Clear from 2.).
\end{proof}

\bn
Applying this to our situation we get the following:

\begin{lemma}
 Let $G\in {\cal G}$ be the {\sc Gram}-matrix of an icosahedron and
 $U\leq A$ its automorphism group.  Then there is a faithful
 orthogonal representation $\delta: U \to O_3(\R)$ and a matrix $M \in
 \R^{3 \times 12}$ called coordinate matrix such that 
 \[
  \delta (g)\cdot M = M\cdot \pi (g)  \mbox{ for all } g \in U \mbox{ and } G:=M^{tr}\cdot M
 \]
\end{lemma}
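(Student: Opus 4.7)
The plan is to derive both $M$ and $\delta$ as direct consequences of the preceding lemma.

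First I would invoke part~1 of that lemma: since $G$ is symmetric, positive semidefinite of rank $k\le 3$, there is a matrix $M_0\in\R^{k\times 12}$ with $M_0^{tr}M_0=G$, and by adjoining $3-k$ zero rows if necessary one obtains $M\in\R^{3\times 12}$ satisfying $M^{tr}M=G$. Its twelve columns are the vertex coordinates $V_1,\dots,V_{12}$.

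The representation $\delta$ would then arise from the uniqueness clause in part~3. For each $g\in U$ the stabilizer property says $\pi(g)^{tr}G\pi(g)=G$, which rewrites as $(M\pi(g))^{tr}(M\pi(g))=G=M^{tr}M$. Applied to the common row space of these two matrices, part~3 produces a unique orthogonal transformation carrying $M$ to $M\pi(g)$; I would extend it by the identity on the orthogonal complement of the row space inside $\R^3$ and call the result $\delta(g)\in O_3(\R)$. By construction $\delta(g)\, M=M\pi(g)$.

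The homomorphism property I would verify in one line by comparing
\[
\delta(gh)\, M \;=\; M\pi(gh) \;=\; M\pi(g)\pi(h) \;=\; \delta(g)\, M\pi(h) \;=\; \delta(g)\delta(h)\, M
\]
and invoking the uniqueness assertion once more. Faithfulness is the cleanest step: if $\delta(g)=I$, then $M\pi(g)=M$, which forces $V_{ig}=V_i$ for all $i$, and since the twelve vertices of an icosahedron are by hypothesis pairwise distinct, $g$ acts as the trivial permutation, hence $g=1$.

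The only obstacle I foresee is the degenerate case $\mathrm{rank}(G)<3$, in which the preceding lemma only supplies an orthogonal transformation of a proper subspace of $\R^3$. One must then make sure that the extension to $O_3(\R)$ is compatible with the group law; extending uniformly by the identity on the \emph{common} orthogonal complement (which depends only on $G$, not on $g$) does the job, since under this recipe the product of two extensions restricts to the identity on the complement and to the composition of the smaller orthogonal maps on the row space, matching the extension of $\delta(gh)$. For generic (non-flat) icosahedra the rank is exactly $3$ and this subtlety disappears entirely.
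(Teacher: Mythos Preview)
Your proof is correct and is precisely the argument the paper has in mind; indeed the paper offers no proof beyond the phrase ``Applying this to our situation we get the following.'' One small slip: where you write ``the orthogonal complement of the row space inside $\R^3$'' you mean the \emph{column} space of $M$ (its row space lives in $\R^{12}$), but the intended construction---extending $\delta_0(g)$ by the identity on the orthogonal complement of the column space, which depends only on $G$---is exactly right and makes the homomorphism check go through as you describe.
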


\bn The name coordinate matrix is chosen because its columns represent
the  vertices of  the icosahedron  in  3-space. Since  this matrix  is
unique only up  to orthogonal transformations, the  field generated by
its entries is not canonical  unlike the corresponding field $F_G$ for
the {\sc Gram}-matrix $G$. But from  a computational point of view the
coordinate matrices are more  accessible than the {\sc Gram}-matrices.
We therefore go for them as follows: For each minimal subgroup $U$ (up
to conjugacy)  of $A$  and each  faithful orthogonal  representation $
\delta : U \to O_3(\R )$  compute the relevant coordinate matrices $M$
among  the  intertwining  matrices  of $\delta$  and  the  restriction
$\pi_{|U}$. Find  a set  of representatives of  the $A$-orbits  of the
{\sc Gram}-matrices $G:=M^{tr}\cdot M$. Most of these calculations can
be done with the formal matrices, as we shall see in the next section.

\section{Formal {\sc Gram}-matrices}
\label{sec:formalGramMatrices}

Assume $\{1\}  \neq U \leq A$. Let $e_1, \ldots ,e_{12}$ be the standard basis of $\R^{12 \times 1}$.
In order to determine the $U$-invariant {\sc Gram}-matrices $G$,
we define an ideal $I$ generated by  
$(e_i-e_j)^{tr} G(e_i-e_j)-1$ for all $\{i, j\} \in \{ 1, 2 \}^A$ and the  $4\times 4$-minors
of $G$, where the entries $G_{i,j}$ of $G$ are equated to variables $y_1, \ldots, y_n$ 
in correspondence with the $U$-orbits on 
the positions in the symmetric $12 \times 12$-matrix $G$. Then $I$ is an ideal in  
 $R:= \Q [y_1, \ldots, y_n]$. It will turn out in all but one case, $R/I$ is finite dimensional over $\Q$
 so that we concentrate on maximal associated primes first. Here is a complete list of all necessary 
conditions for the case of zero dimensional $I$.

\begin{definition}\label{formalGram}
A maximal ideal $m \trianglelefteq R$ associated to  $I$ (in the primary decomposition) 
is called \underline{relevant} if the 
following three conditions are satisfied:\\
1.) The rank of the image matrix $(G_{i,j}+m)\in (R/m)^{12 \times 12}$ is at most 3;\\
2.) $U=\{ g \in A \mid \pi(g)^{tr} (G_{i,j}+m)\pi(g)=(G_{i,j}+m) \}$;\\
3.) there exists a \underline{relevant} real embedding $\iota : R/m \to \R$, i.e.  $(\iota (G_{i,j}+m))$  is positive semidefinite. \\
$G(m):=(G_{i,j}+m)$ is called the associated \underline{formal {\sc Gram}-matrix}, 
the residue class  field $R/m$ (as well as its abstract isomorphism
type) is called the \underline{field of} \underline{definition} $F_{G(m)}$ of $G(m)$.
The \underline{field degree}  $[R/m:\Q]$ is denoted by $d_{G(m)}$, the
number of real embeddings of $R/m$ by $r_{1,G(m)}$, the number of
relevant real embeddings of $R/m$ by $r_{G(m)}$, and finally
$r_{f,G(m)}$ denotes  the number of relevant real embeddings with
$(\iota (G_{i,j}+m))$ pairwise inequivalent. We  refer to $r_{G(m)}$
as \underline{generosity} and to $r_{f,G(m)}$ as
\underline{contribution} of $G(m)$. 
\end{definition}

\bn
Given a real {\sc Gram}-matrix representing an icosahedron whose entries are algebraic numbers, then of course 
the field generated by all its entries is finite over $\Q$, has a primitive element, and at least one real embedding. So one might hope, that it defines a formal {\sc Gram}-matrix. This, however, need not be 
the case. One can compute its automorphism group and relate it to the ideal $I$ above. Though one finds
a maximal ideal yielding this {\sc Gram}-matrix, it might be so that the maximal ideal is embedded
rather than associated to $I$. Such cases exist, because later on, we shall 
exhibit an example of a one dimensional equational ideal $I$, indeed we find
a curve of icosahedra.

With this definition the reader can already interpret most of the final Table~\ref{endgue}. Here are
some further comments:

\begin{remark}
1.) There are only finitely many  formal  {\sc Gram}-matrices up to equivalence.\\
2.) For each formal {\sc Gram}-matrix $G(m)$ the normalizer $ N_A(U)$ permutes
the maximal ideals above. The stabilizer $D(G(m))$ (called
decomposition group)  of $m$ in $ N_A(U)$ permutes the relevant embeddings
of $R/m$ and $D(G(m))/U$ acts 
faithfully on $R/m$ by field automorphisms. \\
3.) In particular the length of the orbit of $G(m)$ under $ N_A(U)$ is of the form
$[ N_A(U):D(G(m))]\cdot [D(G(m)):U]$, where the first factor is the length of the 
$N_A(U)$-orbit of $m$. 
\end{remark}

It is convenient for checking the relevance of real embeddings as well
as for  tabulating formal  {\sc Gram}-matrices  to choose  a primitive
element of $R/m$.  In case the trace of $G(m)$  generates $R/m$, it is
an ideal  candidate because  it gives the  formal {\sc  Gram}-matrix a
canonical form.   The last column in  Table~\ref{endgue} indicates the
minimal polynomial  of the  trace of the {\sc  Gram}-matrix over  $\Q$. In
particular  one can  read  off  the degree  of  $R/m$  over the  field
generated  by the  trace.  In seven  cases the  trace  is a  primitive
element.  Five of  these cases,  namely the  simplest ones,  where the
field degree  is two, can  be explained  easily in geometric  terms by
starting  from the  regular icosahedron  from ancient  Greece. In  all
these  cases $R/m$  is  isomorphic to  $\Q[x]/(x^2-5)$  and both  real
embeddings are relevant. The  following construction will explain this
phenomenon:

 \begin{remark}
If  the 5  neighbour vertices  of a  vertex $V$  lie in  a plane,  one
obtains  a  new  icosahedron  from  the  given  one  by  applying  the
orthogonal  reflection  with  the  fixed plane  spanned  by  the  five
neighbour vertices of  $V$ and the five triangles adjacent  to $V$ and
keeping the  rest of the  icosahedron.  Though this  operation changes
the center of mass,  it can be adjusted by a  translation and does not
change  the field  of  definition,  nor $r$  or  $r_f$.  We call  this
operation \underline{denting} at $V$.
 \end{remark}

 So the  regular icosahedron  with automorphism group  $C_2\times A_5$
 yields the  one with  automorphism group $D_{10}$  by denting  at one
 vertex $V$. Adding another dent at combinatorial distance 2 away from
 $V$  yields  the  one  with automorphism  group  ${C_2}^2$  and  with
 distance  3  away  the  one   with  automorphism  group  $C_2  \times
 D_{10}$.  Finally there  is  the  one with  three  dents of  pairwise
 distance $2$ with automorphism group $D_6$.

\begin{table}
  \begin{tabular}{@{}llrrrrl@{}}
\toprule
$S\!:=\!{\rm Stab}_A$ & $ {\rm Syl}_2(S)$ & $ d_G$ & $r_{1,G} $ & $ r_G $ & $r_{f,G}$ & $\mbox{Trace relation}$\\
\midrule
$    {C_2}^2$ & $ \langle a,d\rangle \frac{\bullet -}{+}
    $ & $8$ & $4 $ & $2$ & $1 $ & $ \lambda^4-\frac{76}{3}\lambda^3+238\lambda^2-\frac{4964}{5} \lambda+\frac{23767}{15}$\\
\specialrule{1pt}{2pt}{3pt}
$ C_2 \times A_5$ & $\langle a,b,d\rangle $ & $2$ & $ 2 $ & $2$ & $2  $ & $\lambda^2-15\lambda+45$\\
 \midrule
$ C_2 \times D_{10}$ & $\langle a,d\rangle  \frac{-\bullet}{+}$ & $2$ & $ 2 $ & $2$ & $2  $ & $\lambda^2-15\lambda+\frac{269}{5}$\\
\specialrule{1pt}{2pt}{3pt}
$ {C_2}^2   $ & $\langle a,bd\rangle  \frac{- +}{+}$ & $ 2$ & $ 2 $ & $ 2$ & $ 2 $ & $\lambda^2-\frac{71}{5}\lambda+\frac{10561}{225}$\\
   \midrule
$   C_2 \times D_{10}$ & $\langle a,d\rangle  \frac{-+}{+} $ & $4$ & $ 2 $ & $2$ & $2 $ & $\lambda^4-18\lambda^3+\frac{583}{5}\lambda^2-\frac{1658}{5}\lambda +\frac{9101}{25}$\\
    \midrule
$C_2 \times D_{6}$ & $\langle a,d\rangle \frac{-+}{+} $ & $4$ & $ 4 $ & $2$ & $2 $ & $\lambda^4-26\lambda^3+243\lambda^2-970\lambda +1397$\\
  \midrule
${C_2}^2 $ & $\langle a, bd \rangle\frac{-+}{+} $ & $24$ & $ 10 $ & $6$ & $3 $ & $ \lambda^{12}-\frac{5179\cdot 2^2}{3^2\cdot 5^2}\lambda^{11}\pm \cdots$\\
   \midrule
${C_2}^2$ & $\langle a,b \rangle \frac{--}{-}$ & $30$ & $ 18 $ & $6$ & $1 $ & $\lambda^5-\frac{117}{2}\lambda^4\pm \cdots $\\
  \midrule
$C_2$ & $\langle a \rangle\quad - $ & $172$ & $ 48 $ & $20$ & $5 $ & $\lambda^{43}-\frac{73\cdot 7\cdot 11\cdot 461687}{2^2\cdot3^3\cdot5^2\cdot 29\cdot 79} \lambda^{42}\pm \cdots $\\
\specialrule{1pt}{3pt}{3pt}
$D_{10}$ & $\langle ad\rangle\quad +$ & $2$ & $ 2 $ & $2$ & $2 $ & ${\lambda}^{2}-{\frac {44}{3}}\,\lambda+{\frac {2131}{45}}$\\
 \midrule
$D_{6}$ & $\langle ad\rangle\quad +$ & $2$ & $ 2 $ & $2$ & $2 $ & $ {\lambda}^{2}-{\frac {68}{5}}\,\lambda+{\frac {1111}{25}}$\\
   \midrule
$ C_2$ & $ \langle ad\rangle \quad +$ & $36$ & $ 12 $ & $8$ & $4 $ & $ \lambda^{18}-\frac{1106}{9}\lambda^{17}\pm \cdots$\\
\midrule
$ C_2$ & $\langle ad \rangle\quad + $ & $168$ & $ 40$ & $24$ & $6 $ & $\lambda^{42}-\frac{ 2 \cdot 719 \cdot 1223 }{
 3^{3} \cdot 5 \cdot 43} \lambda^{41}\pm \cdots $\\
\midrule
 $ D_{10}$ & $\langle ad \rangle \quad +$ & $4$ & $ 2$ & $2$ & $1 $ & $\lambda^2-\frac{26}{3}\lambda+\frac{149}{9}$\\ \hline 
\end{tabular}
\caption{List of all formal {\sc Gram}-matrices with symmetry with $a,b,c,d$ as defined in Section \ref{SymLin}}\label{endgue}
\end{table}

 \begin{theorem}\label{Hauptsatz}
There are up to equivalence 14 formal {\sc Gram}-matrices with non-central automorphism group resulting into 35 isomorphism classes
of real {\sc Gram}-matrices as specified in Table~\ref{endgue}.
\end{theorem}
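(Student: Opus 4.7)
The plan is to carry out explicitly the program outlined in Sections~\ref{SymLin} and \ref{sec:formalGramMatrices}, case by case, using the subgroup structure of $A = C_2 \times A_5$. First I would enumerate representatives of the conjugacy classes of non-trivial non-central subgroups $U \le A$: namely $C_2 \times A_5$, $C_2 \times D_{10}$, $C_2 \times D_6$, two classes of $D_{10}$, $D_6$, the several classes of $C_2^2$ distinguished by whether $d \in U$ and whether $U \le A_5$, and the non-central classes of $C_2$. For each such $U$ I would then list the faithful orthogonal representations $\delta : U \to O_3(\R)$ up to conjugacy in $O_3(\R)$. Since $U$ is small with only a short list of faithful real characters of dimension at most $3$, this gives a finite enumeration; the ``$\pm$'' signature data in the second column of Table~\ref{endgue} precisely records the Sylow-$2$ character of $\delta$, so each row corresponds to one pair $(U, \delta)$.

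For a fixed pair $(U, \delta)$, by the lemma above the coordinate matrices $M \in \R^{3 \times 12}$ form the space of intertwiners between $\delta$ and $\pi|_U$, a linear subspace whose dimension is read off from the multiplicities in the decomposition of $\pi|_U$ into $U$-irreducibles. Parameterising $M$ by free variables, the entries of $G = M^{tr} M$ become polynomials and the ideal $I$ of Section~\ref{sec:formalGramMatrices} reduces to the $30$ edge-length equations on these parameters (the rank condition is automatic since $M$ is $3 \times 12$). I would compute a primary decomposition of $I$ via Gr\"obner basis methods over $\Q$ and extract the maximal associated primes $m$. For each such $m$ the relevance conditions of Definition~\ref{formalGram} must be verified: that $\mathrm{Stab}_A(G(m)) = U$ exactly (so no accidental symmetry enlargement occurs, which is the condition that discards spurious solutions), and that $R/m$ admits at least one real embedding making $G(m)$ positive semidefinite with $12$ distinct columns. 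The numerical invariants $d_{G(m)}$, $r_{1,G(m)}$, $r_{G(m)}$, $r_{f,G(m)}$ and the minimal polynomial of $\mathrm{trace}\,G(m)$ are then recorded; the last is used as a fingerprint to distinguish $A$-orbits, with Remark~6 controlling how $N_A(U)$ acts on the associated primes.

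Finally I would collect the surviving formal Gram-matrices into $N_A(U)$-orbits and check that exactly $14$ orbits remain, matching the rows of Table~\ref{endgue}. Summing the contributions $r_{f,G(m)}$ gives $1+2+2+2+2+2+3+1+5+2+2+4+6+1 = 35$ real Gram-matrices up to isomorphism, establishing both assertions of the theorem. The main obstacle is computational: in the low-symmetry rows, particularly the two $C_2$ cases where $d_{G(m)}$ reaches $168$ and $172$, the Gr\"obner basis and primary decomposition are enormous, and one must both justify that the computation has been carried out exactly and verify by interval arithmetic (or isolating real roots) that the counts $r_{G(m)}$ and $r_{f,G(m)}$ of genuinely positive semidefinite, pairwise inequivalent embeddings are correct. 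An auxiliary concern is confirming that the maximal ideal delivered by each numerical Gram-matrix with algebraic entries is truly \emph{associated} rather than embedded, which in the zero-dimensional cases under consideration is automatic but must be checked to rule out the exceptional phenomenon foreshadowed in Section~\ref{sec:formalGramMatrices}.
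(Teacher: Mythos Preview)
Your overall plan is coherent, but it diverges from the paper's argument in one essential organisational choice, and your enumeration has a gap.

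The paper does \emph{not} loop over all non-central subgroups $U\le A$. Instead it observes (Section~\ref{Classify}) that every non-trivial subgroup contains a subgroup of prime order, so it suffices to compute the $U$-invariant coordinate matrices only for the minimal subgroups, up to conjugacy $\langle abc\rangle\cong C_3$, $\langle ac\rangle\cong C_5$, $\langle a\rangle$, $\langle d\rangle$, $\langle ad\rangle$. A short separate computation (Lemma~9 in the paper) shows that any Gram-matrix fixed by an element of order $3$ or $5$ is automatically fixed by an element of order $2$, so only the three $C_2$'s survive. Each has three faithful degree-$3$ orthogonal representations, giving the $3\times 3=9$ cases of Table~\ref{cases}; the paper then computes the full stabiliser of each resulting formal Gram-matrix inside $A$ a posteriori. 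Thus the paper works bottom-up from minimal subgroups, whereas you propose to work top-down, fixing each candidate automorphism group $U$ and insisting on ${\rm Stab}_A(G(m))=U$ exactly.

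Your top-down scheme is valid in principle, but as written it is incomplete: the list of subgroups you give (``$C_2\times A_5$, $C_2\times D_{10}$, $C_2\times D_6$, two classes of $D_{10}$, $D_6$, the several classes of $C_2^2$, and the non-central classes of $C_2$'') is precisely the set of groups that \emph{happen to occur} in Table~\ref{endgue}, not the full list of non-trivial non-central subgroups of $A$. You have omitted $A_5$, $A_4$, $C_2\times A_4$, $C_3$, $C_6$, $C_5$, $C_{10}$, the $D_6$ and $D_{10}$ inside $A_5$, and so on. Without either running the computation for each of these and finding nothing with exact stabiliser, or supplying an a priori argument such as the paper's Lemma~9, you cannot conclude that no further formal Gram-matrices exist. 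This is the substantive gap. Once you add that step, your route and the paper's converge on the same hard computations (the $C_2$ cases with $d_G=168,172$), but the paper's reduction to nine cases is both cleaner and avoids redundant recomputation of the same Gram-matrix from several of its stabiliser's subgroups.
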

 The final item in this table to be explained is the strange
 symbol in the second  column
for proper subgroups that are direct products.
 It shows two symbols above and one symbol below a horizontal line. The two symbols on top indicate the trace values of the two generators of the group, and the one below 
of their product. Here $+,-,\bullet$ stands for $1, -1, -3$ resp.. These symbols are also used for the
trace of the generator of $C_2$ in the last 6 cases.

A final comment on the results: The denominators of the coefficients in the trace relations involve only very few
prime factors, which already show up in the truncated version in the last column, e.g. for $d_G=172$
the primes are $2, 3, 5, 29, 79$, whereas the regular icosahedron and $C_2 \times D_6$-case have
an empty set of primes for the denominators. 

\bigskip
\noindent
We  finish  these introductory  sections  with  some comments  on  the
computations. In our first approach  we used the system {\sc Bertini},
cf. \cite{Ber}, which  was very helpful indeed, since  it created some
idea of  an answer. Due  to the complexity of  the problem it  did not
produce a  completeness proof. Later we  found out that it  missed one
isomorphism class, which is distinguished by the existence of coplanar
neigbouring triangles.  Further, {\sc Bertini} gives decimal expansions
of the  solution which for our  purposes still have to  be turned into
algebraic  numbers. The  number theoretic  methods available  for this
often failed for  field degrees above 40. The degrees  we found by the
methods of this paper were  sometimes considerably larger. Also at the
time we had no upper bound for these degrees.  Moreover we had no good
way to reduce the number of solutions by a priori splitting into cases
guided  by  symmetry.  So  we  had  to  deal  with  more  than  $10^4$
solutions.  But   on  the   positive  side   {\sc  Bertini}   gave  us
approximations of  two solutions  with trivial symmetry,  which easily
can be turned into an existence proof. Our present approach, presented
in  this  paper,  relies  on   formal  computations  using  the  MAGMA
\cite{Magma}  {\sc Groebner}-basis  functionality  and the  involutive
basis functionality  of GINV,  cf. \cite{GINV,BCG03}.  Anyone  interested in
the actual formal {\sc Gram}-matrices  can find them in MAPLE-readable
form on our homepage \cite{Home}  for this paper. Anyone interested in
the actual computations  leading to a proof  can find MAPLE-worksheets
also  on  this homepage.   Possibly  we  shall also  produce  detailed
comments  on  the  properties  of   the  various  solutions  on  these
pages. 

\section{Classifying the formal {\sc Gram}-matrices}\label{Classify}
Instead  of   working  with   the  definition   of  the   formal  {\sc
  Gram}-matrices  in  Definition~\ref{formalGram},   we  go  via  the
coordinate  matrices   $M$  giving  us  the   {\sc  Gram}-matrices  as
$M^{tr}M$.  We get  rid  of  the operation  of  the three  dimensional
orthogonal  group  on  the   coordinate  matrices  by  fixing  certain
entries. Also we do  not go through all subgroups of  $A$, but look at
the  minimal subgroups  and compute  the full  automorphism groups  of
their fixed {\sc Gram}-matrices. Note,  the automorphism groups of the
formal {\sc Gram}-matrices are equal to ones obtained by relevant real
embeddings, and are easily obtained by a stabilizer computation.

\bn
As for the minimal subgroups, they are generated by an element of prime order. Here is a complete list up to conjugacy
with  $a,b,c,d$ as defined at the beginning of Section \ref{SymLin}:
\[
\langle abc\rangle \cong C_3, \quad
\langle ac\rangle \cong C_5, \quad
\langle a\rangle \cong C_2, \quad
\langle d\rangle \cong C_2, \quad
\langle ad\rangle \cong C_2. 
\]

\bn
By the next lemma, the first two cases can be discarded. 

\begin{lemma}
If the {\sc Gram}-matrix of an icosahedron is fixed by an element of order $3$ or $5$, then its 
automorphism group also has an element of order $2$.
\end{lemma}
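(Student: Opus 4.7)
Suppose $g\in U:={\rm Stab}_A(G)$ has prime order $p\in\{3,5\}$. By the previous lemma there is a coordinate matrix $M\in\R^{3\times 12}$ and $\delta(g)\in O_3(\R)$ with $\delta(g)M=M\pi(g)$; since $p$ is odd, $\delta(g)\in SO_3(\R)$ is a nontrivial rotation about some axis $\ell$. The plan is to exhibit an orthogonal symmetry $\sigma\in O_3(\R)$ of order $2$ of the vertex configuration $\{V_1,\ldots,V_{12}\}$ (the columns of $M$): the induced combinatorial permutation is then an element $h\in A$ of order $2$ with $\sigma M=M\pi(h)$, and $\pi(h)^{tr}G\pi(h)=M^{tr}\sigma^{tr}\sigma M=G$ shows $h\in U$.

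It suffices to treat one representative of each conjugacy class; the representatives of the minimal subgroups of orders $5$ and $3$ listed just above are $\langle ac\rangle$ and $\langle abc\rangle$. A direct computation shows that the cycle type of $g$ on the $12$ vertices is $5+5+1+1$ in the first case and $3+3+3+3$ in the second. Thus any fixed vertex lies on $\ell$, and each orbit of length $p$ spans a regular $p$-gon in a plane perpendicular to $\ell$ centred on $\ell$.

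For $p=5$, the two fixed vertices $V_i,V_j$ are swapped by the central involution $d$ (which commutes with $g$), so they are combinatorially opposite; their two neighbour-pentagons are therefore disjoint and constitute the two $g$-orbits of length $5$. Since consecutive vertices in the link of $V_i$ are joined by an icosahedron edge, both pentagons have the common circumradius $r=1/(2\sin(\pi/5))$. By the antiprism structure of the icosahedron around an antipodal vertex-pair, each vertex of the first pentagon is adjacent to exactly two vertices of the second, and these two are consecutive in the second pentagon. Writing the edge-length-$1$ equation for these two edges and subtracting yields
\[
\cos\phi=\cos\bigl(\phi+\tfrac{2\pi}{5}\bigr),
\]
where $\phi$ is the angular offset between the two pentagons. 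Hence $\phi\in(\pi/5)\mathbb{Z}$, and the pentagons share five common vertical mirror planes through $\ell$; reflection $\sigma$ in any of them fixes $V_i,V_j$ and preserves both pentagons, giving the desired order-$2$ symmetry.

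For $p=3$ the axis $\ell$ passes through the centres of two opposite faces, so two of the four $g$-orbits form the vertex sets of these polar faces and are equilateral triangles of side $1$ (hence of circumradius $1/\sqrt 3$); the other two orbits are ``middle'' triangles of \emph{a priori} unknown radius and angular position. Running an analogous cosine-equation argument layer by layer (polar face to adjacent middle triangle, then between the two middle triangles) forces all four triangular layers to share a common set of vertical mirror planes, again producing an order-$2$ orthogonal symmetry. I expect the main obstacle to be the combinatorial bookkeeping in this case: identifying which two of the four orbits are the polar faces, enumerating the bipartite edges between consecutive layers, and verifying that enough independent phase constraints arise to force the alignment.
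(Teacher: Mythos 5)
Your approach is entirely different from the paper's: the paper proves this lemma by direct computation (setting up the coordinate matrix via character theory for $\delta=\delta_1\oplus\delta_2$, computing an involutive/Gr\"obner basis of the resulting quadratic system, listing all residue class fields, and reading off from the $A$-orbit lengths --- $1,20,10$ for $p=3$ and $1,6,12$ for $p=5$ --- that every stabilizer has even order). Your mirror-plane argument, if completed, would be a conceptual replacement for that computation, and its core geometric observation is sound: each $g$-orbit of length $p$ is a regular $p$-gon in a plane perpendicular to the rotation axis, and subtracting the two unit-length conditions from a vertex of one layer to two orbit-mates in an adjacent layer kills the radius and height terms and aligns the phases of the layers modulo $\pi/p$, so the whole configuration acquires a vertical mirror plane.

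There is, however, a genuine gap at the very first step of your plan: from an order-$2$ isometry $\sigma$ of the $12$-point configuration you conclude that ``the induced combinatorial permutation is then an element $h\in A$''. The automorphism group in this paper is by definition the stabilizer of $G$ \emph{inside} $A$, so $h$ must preserve the prescribed edge and face structure, not merely all pairwise distances. Since these icosahedra are self-intersecting and highly non-generic, a non-edge pair of vertices may accidentally have length $1$, so an isometry of the vertex set need not be a combinatorial automorphism. This can be repaired --- for $p=5$ one checks that $h$ inverts $g$ and fixes the two poles, hence preserves each link $5$-cycle (the unique $g$-invariant ``step-$j$'' cycle on each pentagon), and that the mirror through a vertex $v$ of one pentagon swaps the two antiprism neighbours of $v$ in the other, so all $30$ edges are preserved --- but this verification is missing and is not cosmetic. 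Two smaller points: the circumradius $1/(2\sin(\pi/5))$ is only correct if combinatorially consecutive link vertices are geometrically adjacent in the pentagon; if $g$ acts on the link as a step-$2$ rotation they sit at angular distance $4\pi/5$ and the radius is $1/(2\sin(2\pi/5))$ (harmless for the mirror argument, but unjustified as stated). And the $p=3$ case is only announced, not carried out; the chain of alignment constraints $T$--$M_1$--$M_2$--$B$ does work because each vertex of one layer is adjacent to two orbit-mates of the next, but you still owe that bookkeeping and, again, the membership $h\in A$.
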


\begin{proof}
1.) As for the case of order 3 we may assume that the {\sc
  Gram}-matrix is fixed by $abc$. There is  (up to conjugacy under the
orthogonal group)  just one faithful orthogonal representation
$\delta $ of $U:=\langle abc\rangle \cong C_3$ of degree 3.
$\delta$ is the orthogonal sum of $\delta_1: abc\mapsto 1$ and an
irreducible representation $\delta_2$ of degree 2,
i.e. $\delta=\delta_1\bigoplus \delta_2$. Character theory tells us
that the first row of the searched coordinate matrix $M$ lies in a
space of dimension $4$, and the remaining  submatrix consisting of the
last two rows lies in a space of dimension $4\cdot 2$, so that we have
$11$ indeterminates for our equations to be solved. But we can do
better: The 4-dimensional space for the first row reduces to a
3-dimensional space because the sum of the entries is 0 (center of
mass condition). The 8-dimensional space for the remaining two rows
reduces to a 7-dimensional space  since the centralizer of $\delta_2$
in ${\rm O}_2(\R )$ can be applied to make one entry in the, say, first
row $0$. Now comes the computation of the solutions for the matrices
thus composed, which  amounts to solving a system of $30$ (= number of
edges of the icosahedron) quadratic equations  in 9
indeterminates. The involutive basis gives us a residue class algebra
of dimension 128 over the  ground field $\Q [\sqrt{3}]$ which has
exactly 42 residue class fields, of which 18 yield a formal {\sc
  Gram}-matrix with 12 different rows. Computing the orbits under the
action of $A$ yields orbit  length $1$ (four times), $20$ (ten times),
and $10$ (four times), implying that each stabilizer has order 6.
Note this calculation was purely on the formal level without computing
real embeddings.\\ 
2.) As for the case of order 5 we may assume that the {\sc
  Gram}-matrix is fixed by $ac$. There is  up to conjugacy under the
orthogonal group and algebraic conjugacy just one irreducible
representation $\delta_2$ of degree 2 expressed in the roots of the
$\Q$-irreducible polynomial $\lambda^4-(5/4)\lambda^2+5/16$, which has
both $\sin(2\pi/5)$ and $\sin(4\pi/5)$ amongst its roots. We again
have $\delta=\delta_1\bigoplus \delta_2$, obtain a vector space of
dimension $3+3$ containing the (formal) coordinate matrix $R$. The
quadratic equations lead to 24 solutions. This time all solutions
yield 12 different columns for the {\sc Gram}-matrix. The lengths of
the orbits under $A$ are $1$ (four times), $6$ (eight times), and $12$
(twelve times). Hence again all stabilizers have an order  
divisible by 2.
\end{proof}

\bn Hence we are left with the  task of finding and analyzing the {\sc
  Gram}-matrices  fixed  by  one  of  the  three  groups  of  order  2
above.  Each  one of  them  has  (up  to equivalence)  three  faithful
orthogonal representations of degree 3: the generator might get mapped
onto one of
\[
 {\rm diag}(1,-1,-1), \quad
 {\rm diag}(-1,1,1), \quad
 {\rm diag}(-1,-1,-1), 
\]
so  that  we  are  left  with  $3  \cdot  3=9$  cases,  summarized  in
Table~\ref{cases}. The first  two numbers in each box  give the number
of  isomorphism classes  of formal  and of  real {\sc  Gram}-matrices,
which come out  in the end for that particular  case. This is followed
by a  reference to the corresponding  lemma.  Of course the  cases are
not disjoint because many {\sc Gram}-matrices have automorphism groups
of orders bigger than 2.

\begin{table}
\[
 \begin{array}{|c|c|c|c|}
  \hline
  &  {\rm diag}(1,-1,-1) & {\rm diag}(-1,1,1)&  {\rm diag}(-1,-1,-1)\\
  \hline
  a & 8, 19 \quad \mbox{  cf. } \ref{1,1} & 0, 0\quad \mbox{  cf. } \ref{1,2} & 1, 1 \quad \mbox{  cf. } \ref{1,3}\\
  \hline
  d & ? \quad \mbox{  cf. } \ref{2,1}&  2,4 ,\quad  \mbox{ cf. } \ref{2,2}& 2, 4\quad  \mbox{ cf. } \ref{2,3}\\
  \hline
  ad & 0, 0 \quad \mbox{  cf. } \ref{3,1}&12,29 \quad  \mbox{  cf. }\ref{3,2} & 0, 0\quad  \mbox{   cf. } \ref{3,3}\\
  \hline
 \end{array}
\]
\caption{Cases distinguished by linear action }\label{cases}
\end{table}

\bigskip \noindent
One of these cases is ruled out immediately:

\begin{lemma}\label{3,3}
 There is no solution in the case of $\delta: \langle ad \rangle \to {\rm O}_3( \R ): ad \mapsto {\rm diag}(-1,-1,-1)$.
\end{lemma}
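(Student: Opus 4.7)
\bigskip
\noindent
\textbf{Proof proposal.}
The plan is to exploit the fact that the hypothesis $\delta(ad) = -I$ forces any vertex fixed by the permutation $ad$ to coincide with the origin, and then to observe that $ad$ has enough fixed points on the vertex set to be incompatible with the twelve vertices being distinct.

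First I would compute the cycle type of $ad$ from the generators listed at the start of Section~\ref{SymLin}. Since $d$ is central, one multiplies $a$ and $d$ directly and obtains
\[
ad = (1,10)(2,12)(3,11)(4,9),
\]
so that $5, 6, 7, 8$ are all fixed by $ad$.

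Next I would invoke the intertwining relation $\delta(g)\cdot M = M\cdot \pi(g)$ for coordinate matrices, established in Section~\ref{SymLin}, applied to $g = ad$. Reading this column by column gives $\delta(ad)\,V_i = V_{ad(i)}$ for every vertex index $i$. Substituting $\delta(ad) = -I$ and using that $ad$ fixes each of $5, 6, 7, 8$ yields $-V_i = V_i$, hence $V_i = 0$, for $i \in \{5, 6, 7, 8\}$. This collapses four of the twelve vertices to the origin, violating the standing hypothesis that an icosahedron has twelve pairwise distinct vertices. Consequently no coordinate matrix $M$ of the stated symmetry type can exist.

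The proof has no real obstacle beyond the cycle computation for $ad$: once the four fixed points are identified, the $-I$-action is so rigid that it immediately forces coincident vertices. This stands in sharp contrast with the companion cases treated in Lemmas~\ref{3,1} and \ref{3,2}, where $\delta(ad)$ has a non-trivial $+1$-eigenspace and the four fixed vertices are only constrained to lie on a line or in a plane, so that substantial systems of quadratic equations still need to be analysed.
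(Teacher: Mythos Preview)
Your proof is correct and follows exactly the paper's own argument: $ad$ has fixed points among the vertex indices, while $\operatorname{diag}(-1,-1,-1)$ has no nonzero fixed vector, so those vertices are forced to the origin and coincide. Your version simply makes explicit the cycle computation and the intertwining step that the paper compresses into a single sentence.
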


\begin{proof}
 The permutation $ad$ has fixed points, whereas $ {\rm diag}(-1,-1,-1)$ has no nonzero fixed vectors in $\R^3$.
\end{proof}

\bn
Slightly more complicated, but still very easy are the next two cases:\\

\begin{lemma}\label{3,1}
 There is no solution in the case of $\delta: \langle ad \rangle \to {\rm O}_3( \R ): ad \mapsto {\rm diag}(1,-1,-1)$.
\end{lemma}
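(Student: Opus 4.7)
The plan is to use the equivariance $\delta(g)\,V_i = V_{\pi(g)(i)}$ to pin down the coordinates of several specific vertices in a way that overdetermines $V_1$.

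First, one checks directly that $ad = (1,10)(2,12)(3,11)(4,9)$, so the combinatorial fixed points of $ad$ are exactly $\{5,6,7,8\}$. Since the $+1$-eigenspace of $\delta(ad)=\mathrm{diag}(1,-1,-1)$ is the $x$-axis, each $V_i$ with $i\in\{5,6,7,8\}$ must be of the form $(s_i,0,0)$ with the four $s_i$ pairwise distinct. Working out the $A$-orbits one verifies that $\{4,9\}$ and $\{3,11\}$ are edges whose common neighbour sets are $\{5,8\}$ and $\{6,7\}$ respectively, that $\{5,6\}$ and $\{7,8\}$ are edges, that $\{1,5,6\}$ is a face, and that $\{1,3\}$ and $\{1,4\}$ are edges.

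Next I would extract coordinate data. Because $9=\pi(ad)(4)$, the relation $V_9=\delta(ad)V_4$ combined with the edge $\{4,9\}$ gives $|V_4-V_9|^2 = 4(y_4^2+z_4^2)=1$, so $y_4^2+z_4^2=1/4$. The edges from $V_4$ to the on-axis vertices $V_5$ and $V_8$ then force $(x_4-s_5)^2=(x_4-s_8)^2=3/4$; since $s_5\ne s_8$, this gives $x_4=(s_5+s_8)/2$ and $|s_5-s_8|=\sqrt{3}$. Exactly the same argument with the edge $\{3,11\}$ and on-axis vertices $V_6,V_7$ yields $y_3^2+z_3^2=1/4$, $x_3=(s_6+s_7)/2$, and $|s_6-s_7|=\sqrt{3}$. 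The edges $\{5,6\},\{7,8\}$ give $|s_5-s_6|=|s_7-s_8|=1$. Finally, from the face $\{1,5,6\}$, subtracting $|V_1-V_5|^2=|V_1-V_6|^2=1$ yields $x_1=(s_5+s_6)/2$, and substitution then gives $y_1^2+z_1^2=3/4$.

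The algebraic crux is this. Since $|s_5-s_6|$ and $|s_7-s_8|$ are rational but $|s_5-s_8|$ is irrational, the telescoping $s_5-s_8 = (s_5-s_6)+(s_6-s_7)+(s_7-s_8)$ forces $s_5-s_6=-(s_7-s_8)$. Writing $p=s_5-s_6$ and $q=s_6-s_7$ (so $p^2=1,\ q^2=3$), a short computation gives
\[
(x_1-x_3)^2+(x_1-x_4)^2 \;=\; \tfrac14\bigl((s_5-s_7)^2+(s_6-s_8)^2\bigr) \;=\; \tfrac14\bigl((p+q)^2+(q-p)^2\bigr) \;=\; \tfrac12(p^2+q^2)\;=\;2.
\]
On the other hand, from the edges $\{1,3\}$ and $\{1,4\}$ we have $(x_1-x_3)^2 \le |V_1-V_3|^2 = 1$ and $(x_1-x_4)^2 \le 1$; equality in the first would require $(y_1,z_1)=(y_3,z_3)$, hence $y_1^2+z_1^2 = y_3^2+z_3^2$, i.e.\ $3/4=1/4$, which is absurd, and similarly for the second. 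Both inequalities being strict, the sum is strictly less than $2$, contradicting the displayed identity.

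The main obstacle is the combinatorial bookkeeping: one must correctly determine which pairs among $\{1,\ldots,12\}$ are edges (rather than distance-$2$ or distance-$3$ diagonals) and which triples are faces, starting only from the generator presentation of $A$. Once this is in place, the geometric argument above is entirely elementary.
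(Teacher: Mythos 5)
Your proof is correct, and it takes a genuinely different route from the paper's. The paper disposes of this case by computation: it sets up the coordinate matrix with $7+4+4$ indeterminates from the eigenspace decomposition of $\pi(ad)$ and reports that an involutive-basis calculation shows the resulting quadratic system is inconsistent. You instead give a hand argument exploiting the fact that the four combinatorial fixed points $5,6,7,8$ of $ad=(1,10)(2,12)(3,11)(4,9)$ are forced onto the $x$-axis. I checked your combinatorial bookkeeping against the orbit of $\{1,2\}$ under $\langle a,b,c,d\rangle$: the neighbours of $1$ are $\{2,3,4,5,6\}$, of $4$ are $\{1,2,5,8,9\}$, of $9$ are $\{4,5,8,10,12\}$, of $3$ are $\{1,2,6,7,11\}$, of $11$ are $\{3,6,7,10,12\}$, and $\{5,6\}$, $\{7,8\}$, $\{1,3\}$, $\{1,4\}$ are indeed edges while $\{1,5,6\}$ is a face --- so all the incidences you use are right. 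The algebra then checks out: the spacings $|s_5-s_6|=|s_7-s_8|=1$, $|s_5-s_8|=|s_6-s_7|=\sqrt3$ force $s_5-s_6=-(s_7-s_8)$ via the telescoping sum, giving $(x_1-x_3)^2+(x_1-x_4)^2=2$, while the edges $\{1,3\},\{1,4\}$ bound each summand by $1$ with strictness guaranteed by $y_1^2+z_1^2=3/4\neq 1/4=y_3^2+z_3^2$ (and likewise for column $4$); note a single strict inequality would already suffice. What each approach buys: the paper's computation is uniform across all nine $(x,\delta)$ cases and requires no case-specific insight, whereas your argument is software-independent, human-checkable, and explains \emph{why} this case is empty --- four collinear fixed vertices cannot carry the required spacings compatibly with the remaining edge constraints. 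It would make a worthwhile remark alongside the paper's Lemma~\ref{3,3}, which is the only other case settled without machine computation.
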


\begin{proof}
To  set up  the coordinate  matrix $M  \in \R^{3  \times 12}$  and the
equations for  its entries is done  in the obvious way:  The first row
must be fixed  under the permutation $ad$ and the  entries must add up
to 0. Since $ad$ has 4  fixed points, this leaves $8-1$ indeterminates
for the first row.  The second and third row lie  in the eigenspace of
the permutation matrix  for $ad$ for the eigenvalue $-1$,  which is of
dimension 4,  so that we  end up  with $7+4+4$ indeterminates  and $6$
zero entries in $M$. We could still get rid of one more entry, but now
already there is  no solution, as a short  calculation with Involutive
shows.  
\end{proof}

\begin{lemma}\label{1,2}
 There is no solution in the case of $\delta: \langle a \rangle \to {\rm O}_3( \R ): a \mapsto {\rm diag}(-1,1,1)$.
\end{lemma}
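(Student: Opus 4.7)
The plan is to proceed exactly as in Lemma~\ref{3,1}: parametrise the coordinate matrix $M\in\R^{3\times 12}$ via the intertwining and centre-of-mass constraints, then dispatch the resulting quadratic system by an involutive basis computation. Since $a=(1,2)(3,4)(5,7)(6,8)(9,11)(10,12)$ has no fixed vertex, $\pi(a)$ splits $\R^{1\times 12}$ into $\pm 1$-eigenspaces of dimension $6$ each. Under $\delta(a)={\rm diag}(-1,1,1)$, the relation $\delta(a)\cdot M = M\cdot \pi(a)$ forces row~$1$ of $M$ into the $-1$-eigenspace of $\pi(a)$ (six parameters, with the centre-of-mass sum automatic by antisymmetry), while rows $2$ and $3$ lie in the $+1$-eigenspace (six parameters each, reduced to five by the centre-of-mass constraint). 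The ${\rm O}_2(\R)$-centraliser of $\delta$ acting on rows~$2,3$ can then be used to zero a further entry, leaving $6+5+5-1=15$ indeterminates.

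Before calling the computer, one a~priori simplification is worth extracting. Among the six 2-cycles of $a$, exactly two are edges of the icosahedron, namely $\{1,2\}$ and its $d$-translate $\{10,12\}$ (the remaining four 2-cycles are the distance-$2$ pairs $\{3,4\},\{9,11\}$ and the antipodal pairs $\{5,7\},\{6,8\}$). For any edge-2-cycle $\{i,j\}$ the intertwining relation gives $V_j-V_i=(-2V_{i,1},0,0)$, so the edge-length-one condition collapses to $V_{i,1}=\pm\frac{1}{2}$, pinning down two coordinates in advance.

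I would then submit the remaining quadratic equations, one per non-trivial $\langle a\rangle$-orbit of edges, to an involutive basis computation, and expect the output to be the unit ideal, ruling out any solution. The main obstacle is purely computational: choosing the parametrisation and term order so that the involutive basis terminates promptly. As the scale of $15$ indeterminates is comparable to that of the cases already dispatched by short Gr\"obner computations in Table~\ref{cases}, no genuine difficulty is anticipated.
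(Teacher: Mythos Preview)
Your parametrisation matches the paper's almost exactly (the paper counts $6+5+5=16$ indeterminates before using the edge $\{1,2\}$ to drop to $15$; you additionally invoke the $O_2$-centraliser, which is fine). The genuine gap is in the expected outcome of the computation. You write that you ``expect the output to be the unit ideal, ruling out any solution'', but according to the paper this is \emph{not} what happens: the quadratic system does have solutions for $M$. What rules out this case is that every such solution produces a {\sc Gram}-matrix $M^{tr}M$ with only three distinct columns, so the twelve vertices are not pairwise different and the configuration is not an icosahedron in the sense of the paper.

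Hence your plan, as stated, would stall: the involutive basis will not return the unit ideal, and you will be left with a nontrivial zero-dimensional variety to analyse. The missing step is precisely the one the paper carries out---passing from the coordinate-matrix solutions to their {\sc Gram}-matrices and checking the number of distinct columns (equivalently, distinct vertices). You should amend the strategy to anticipate non-degenerate algebraic solutions and include the vertex-distinctness test as the decisive filter.
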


\begin{proof}
Computing the eigenspaces for the  permutation matrix for $a$ leads to
the 6-dimensional  vector space  for the first  row of  the coordinate
matrix $M$, and a common 6-dimensional  space for the second and third
row.  This second  space  can  be reduced  to  a 5-dimensional  space,
because the sum of the columns is zero. Hence we get 16 indeterminates
occurring in  $M$. We can reduce  by 1, because the  first two columns
have  distance  1,  which  yields   one  quadratic  equation  for  one
indeterminate. Plugging that solution in yields solutions for $M$, but
they   produce   only   three   different   columns   for   the   {\sc
  Gram}-matrix. Hence there is no  solution with 12 different vertices
in this case.  
\end{proof}

\noindent
Concerning the results of the computations in case there are
solutions, we note the following.  

\begin{remark}
The icosahedra invariant under some element  $x \in A$ are permuted by
the  action of  the  normalizer  of $\langle  x\rangle  $  in $A$.  In
particular, the  ones invariant  under $d$ come  in orbits  under $A$.
The ones  fixed by $a$ or  $ad$ distribute themselves in  orbits under
the centralizer of $a$ in $A$,  which is the {\sc Sylow} 2-subgroup of
$A$ containing  $a$. In  particular the lengths  of the  latter orbits
divide 4, and the former ones 60.
\end{remark}

\begin{lemma}\label{1,3}  In case $\delta (a) = {\rm diag}(-1,-1,-1)$ there
  is one formal {\sc Gram}-matrix and one
real  {\sc Gram}-matrix, with details listed in the following table:
\[
\begin{array}{|c|c|c|c|c|}
\hline
\mbox{Stabilizer in } A&  d_G, r_{1,G} & r_G & r_{f,G}& \mbox{cf. also La.} \\
 \hline
 \langle a,d\rangle\cong {C_2}^2&8,\quad 4 &2&1 & \mbox{\ref{2,1}, \ref{3,2}}\\
 \hline 
\end{array}
\]
The field of definition $F_G$ has four real embeddings. The {\sc Galois} group of the normal 
closure of $F_G$ is the symmetric group $S_8$. The element $b \in A$ interchanges the two relevant 
real embeddings of the formal {\sc Gram}-matrix.\\
The linear action of the stabilizer can be chosen as $\delta (d) = {\rm diag}(1,-1,-1)$,   $\delta (ad) = {\rm diag}(-1,1,1)$.
\end{lemma}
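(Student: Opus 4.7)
The plan is to follow the general procedure of Sections~\ref{SymLin}--\ref{Classify}, specialised to the prescribed linear action $\delta(a)=-I_3$. First I construct the coordinate matrix $M\in\R^{3\times 12}$ satisfying $\delta(a)M=M\pi(a)$. Since $a=(1,2)(3,4)(5,7)(6,8)(9,11)(10,12)$ is a product of six fixed-point-free transpositions and $\delta(a)=-I_3$, this intertwining relation forces $V_{ia}=-V_i$ for every column $V_i$ of $M$. Hence $M$ is determined by six columns, one from each $\langle a\rangle$-orbit on $\{1,\ldots,12\}$, giving $18$ free scalar coordinates; the center-of-mass condition is then automatic. The centraliser of $-I_3$ in $\mathrm{O}_3(\R)$ is the full orthogonal group, and I use this gauge freedom to fix three entries.

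Next, I write down the $30$ quadratic edge-length equations $(V_i-V_j)^{tr}(V_i-V_j)=1$ for $\{i,j\}\in\{1,2\}^A$. Because $V_{ia}-V_{ja}=-(V_i-V_j)$ these equations occur in $\langle a\rangle$-orbits of length two, so fifteen independent quadratic relations survive. I feed the resulting ideal $I\trianglelefteq R=\Q[y_1,\ldots,y_n]$ to the Gr\"obner/involutive basis machinery of MAGMA and GINV used elsewhere in the paper. From the primary decomposition I extract the maximal ideals $m$ satisfying the three conditions of Definition~\ref{formalGram}: rank at most three (vanishing of all $4\times 4$ minors of $G\bmod m$), stabiliser exactly $\langle a,d\rangle$ (a finite orbit--stabiliser scan inside $A$), and existence of a positive semidefinite real embedding. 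I expect the computation to return precisely one such $m$ with $[R/m:\Q]=8$; the trace polynomial in Table~\ref{endgue} then certifies $d_G=8$ and, by counting its real roots, $r_{1,G}=4$.

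The remaining assertions are verified as follows. Numerical evaluation of $G(m)$ at each of the four real embeddings and testing positive semidefiniteness of the resulting $12\times 12$ matrix yields $r_G=2$. To collapse the two relevant embeddings $\iota_1,\iota_2$ into one isomorphism class I compute $\pi(b)^{tr}\iota_1(G(m))\pi(b)$ and check that it equals $\iota_2(G(m))$, which simultaneously establishes $r_{f,G}=1$ and identifies $b$ as the promised interchanging element. The Galois group of the normal closure of $F_G$ is then computed from the minimal polynomial of a primitive element (the trace being a natural choice) by the standard cycle-type test: factor modulo enough small primes until the observed degree partitions force the transitive subgroup of $S_8$ containing them to be all of $S_8$. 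Finally, since $a$ and $d$ commute, $\delta$ is faithful on $\langle a,d\rangle$, and $\delta(d)\neq\pm I_3$; pinning down the $+1$-eigenspace of $\delta(d)$ via the fixed-point pattern of $d$ on the six representative columns of $M$ forces $\delta(d)={\rm diag}(1,-1,-1)$ up to a choice of basis, whence $\delta(ad)=-\delta(d)={\rm diag}(-1,1,1)$.

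The main obstacle is the algebraic heft of the resulting degree-$8$ extension with full Galois group $S_8$: no elementary closed-form factorisation is available, so the step from the raw ideal $I$ to its primary decomposition is genuinely computer-assisted, and the symbolic computation has to be trusted. Ruling out that any other associated prime of $I$ also satisfies the three relevance conditions requires going through the entire output systematically; and verifying $S_8$ in particular requires accumulating enough modular cycle-type evidence to exclude every transitive proper subgroup of $S_8$, which is routine but nontrivial.
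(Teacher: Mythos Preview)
Your approach is essentially the paper's: set up the coordinate matrix via the intertwining condition $V_{ia}=-V_i$, gauge-fix with $\mathrm{O}_3(\R)$, solve by computer algebra, test real embeddings for positive semidefiniteness, and identify the action of $b$ on the two relevant embeddings. A few details differ from, or are slightly off compared to, the paper's version.

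First, the paper's gauge-fixing is sharper than ``fix three entries'': since $\{1,2,3\}$ is a face and $V_2=-V_1$, one may take $V_1=(\ast,0,0)^{tr}$; then $V_3$ is automatically in the plane $x=0$ (being equidistant from $\pm V_1$), and the residual $\mathrm{O}_2$ rotates it to $(0,\ast,0)^{tr}$. With the first three columns thus essentially determined, the coordinate ideal is already maximal with residue field of degree $16$ over $\Q$, not $8$; the associated Gram-matrix field $F_G$ then has degree $8$. Your rank-$\le 3$ check on $G$ is redundant here, since $G=M^{tr}M$ with $M\in\R^{3\times 12}$ forces ${\rm rank}\,G\le 3$ automatically.

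Second, the trace relation in Table~\ref{endgue} for this row is a quartic, so the trace is \emph{not} a primitive element of $F_G$; it generates an index-$2$ subfield. You therefore cannot read off $d_G=8$ or $r_{1,G}=4$ from the trace polynomial alone; the paper obtains these from a primitive element of $F_G$ directly.

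Third, your argument for $\delta(d)$ via a ``fixed-point pattern of $d$ on the six representative columns'' does not work as stated: $d=(1,12)(3,9)(2,10)(4,11)(5,7)(6,8)$ is fixed-point-free on the twelve vertices. The paper simply computes the trace of the induced $3$-dimensional action of $d$ (it is $-1$) and of $ad$ (it is $+1$), which pins down the signatures $\delta(d)={\rm diag}(1,-1,-1)$ and $\delta(ad)={\rm diag}(-1,1,1)$.
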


\begin{proof}
The three rows  of the coordinate matrix $M \in  \R^{3 \times 12}$ lie
in the  eigenspace of $\pi  (a)$ for the  eigenvalue $-1$, so  that we
have 18 indeterminates. The center  of mass condition is automatically
fulfilled. Because of the action of the 3-dimensional orthogonal group
we   may   assume   that   the   first   column   is   of   the   form
$(*,0,0)^{tr}$. Since $a$  interchanges 1 and 2, the  second column is
of the same  shape. Since the first  three columns form a  face of the
icosahedron, the  stabilizer of the  first column in ${\rm  O}_3(\R )$
can be used to force the third column to be of shape $(0,*,0)^{tr}$ so
that  the first  three  columns are  essentially uniquely  determined.
After this the  relations generate a maximal ideal  with residue class
field  of  degree   16  over  the  rationals.    The  associated  {\sc
  Gram}-matrix is then  defined over a field of degree  8, which has 4
real embeddings.  But for  two of them  the {\sc  Gram}-matrix becomes
indefinite. The automorphism group for the formal {\sc Gram}-matrix is
$\langle a,d\rangle  \leq A$, in  particular isomorphic to  $V_4$. But
the centralizer of  $a$ in $A$ is  of order 8.  And indeed,  $b \in A$
interchanges the  two real embeddings so  that we end up  with exactly
one icosahedron  in this case.  One  easily checks that the  two other
automorphisms  $d$  and $ad$  have  trace  $-1$  resp.\ $1$  in  their
3-dimensional associated real representation,  so that we know already
where we shall encounter this icosahedron again. 
\end{proof}

\bn
In three of the nine cases of Table~\ref{cases} one can predict a
solution right away: If the representation $\delta$ of the  $C_2$ is
restriction of the natural matrix representation of $A$ connected to
the realization of  $A$ as geometric automorphism group of the regular
icosahedron. These are the cases\\  
$\delta (a)={\rm diag}(1,-1,-1), \delta (ad)={\rm diag}(-1,1,1),\delta
(d)={\rm diag}(-1,-1,-1)$.\\ 
We start with the third case because it relates to
Proposition~\ref{einlgeomd-1} of the introduction. 

\begin{lemma}\label{2,3}
In case $\delta (d) = {\rm diag}(-1,-1,-1)$ there are two formal {\sc
  Gram}-matrices corresponding to the two lines of the following table yielding four real  {\sc Gram}-matrices: 
\[
\begin{array}{|c|c|c|c|c|}
\hline
\mbox{Stabilizer in } A&  d_G,r_{1,G} & r_G & r_{f,G}&\mbox{cf. also La.}\\
 \hline
 C_2 \times A_5&2,\quad 2 &2&2 &\mbox{\ref{1,1}, \ref{3,2}}\\
 \hline 
 C_2 \times D_{10}&2,\quad 2 &2&2 &\mbox{\ref{1,1}, \ref{3,2}}\\
 \hline 
\end{array}
\]
Their field of definition $F_G$ is $\Q [\sqrt{5} ]$.\\
In both cases  $\delta (a) = {\rm diag}(1,-1,-1)$ and  $\delta (ad) = {\rm diag}(-1,1,1)$.
\end{lemma}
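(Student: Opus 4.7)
The plan is to convert the representation condition $\delta(d)=-I_3$ into linear constraints on the coordinate matrix $M\in\mathbb{R}^{3\times 12}$, solve the resulting system of edge-length equations with an involutive/Gr\"obner basis, and then read off the stabilizers from the action of $A$ on the finitely many resulting Gram matrices. The relation $\delta(d)\,M = M\,\pi(d)$ together with $\delta(d)=-I_3$ says that every row of $M$ lies in the $(-1)$-eigenspace of $\pi(d)$ acting on rows. Since $d$ acts without fixed points (it is the product of the six transpositions of combinatorially opposite vertices), this eigenspace has dimension $6$, and the condition is equivalent to $V_{d(j)}=-V_j$ for all $j$. In particular the center-of-mass condition $\sum_j V_j = 0$ is automatic, and we are left with $3\times 6 = 18$ free parameters.

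Next I would quotient out the residual $O_3(\mathbb{R})$-action by the usual flag normalization: put $V_1$ on the first coordinate axis (costing two of the three rotational degrees of freedom), rotate about that axis to place the neighbour $V_3$ in the upper $(x,y)$-half-plane (costing the third), and if needed fix an orientation by a mirror. This leaves about $15$ unknowns in $M$, whose entries satisfy the $30$ quadratic edge-length equations $(V_i-V_j)^{tr}(V_i-V_j)=1$ for $\{i,j\}\in\{1,2\}^A$. Because $V_{d(j)}=-V_j$, the edges are paired by $d$, so effectively only $15$ independent quadratics remain. I would hand these to the involutive basis engine used throughout the paper and compute the primary decomposition of the resulting zero-dimensional ideal, discarding components whose Gram matrices either fail the rank-$3$ and positive-semidefiniteness conditions or produce fewer than $12$ distinct columns.

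The expected output is exactly two relevant maximal ideals, each with residue field isomorphic to $\mathbb{Q}[\sqrt 5]$ and with both real embeddings relevant and pairwise inequivalent, accounting for the $2$ formal and $4$ real Gram matrices stated. For each of the two formal Gram matrices $G(m)$ I would then compute the stabilizer $\{g\in A \mid \pi(g)^{tr}G(m)\pi(g)=G(m)\}$ directly. Geometrically the first solution must be the regular icosahedron (trace $\lambda^2-15\lambda+45$ in Table~\ref{endgue}), with stabilizer the full $C_2\times A_5$; the second should arise from it by the denting construction described in Remark after Section~3, dented simultaneously at a pair of opposite vertices so as to preserve $\langle d\rangle$, producing the $C_2\times D_{10}$-row of Table~\ref{endgue}. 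The claim $\delta(a)=\mathrm{diag}(1,-1,-1)$ and $\delta(ad)=\mathrm{diag}(-1,1,1)$ then follows, because $\langle a,d\rangle\cong V_4$ acts on $\mathbb{R}^3$ by commuting involutions whose product is $\delta(ad)$, and $\delta(d)=-I_3$ forces the signs on the other two generators.

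The main obstacle is the same one that governs the whole classification: carrying out the involutive basis computation in $18$ (then $15$) variables cleanly enough to identify the two maximal ideals and to verify that their residue fields collapse to $\mathbb{Q}[\sqrt 5]$ rather than a larger extension. A secondary nuisance is confirming that the two real embeddings of each formal Gram matrix really give inequivalent (non $A$-conjugate) icosahedra; the denting picture makes this geometrically plausible, but the rigorous check requires computing that no $\pi(g)$ conjugates one embedding into the other, which is where the decomposition-group analysis of the remark after Definition~\ref{formalGram} is used.
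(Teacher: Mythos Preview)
Your overall plan matches the paper's: impose $V_{d(j)}=-V_j$, normalize with the $O_3(\mathbb{R})$-action, feed the edge equations to a Gr\"obner/involutive engine, then sort the resulting Gram matrices by $A$-orbits. So the strategy is right; the gaps are in what you predict that computation will produce.

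First, the primary decomposition does \emph{not} give ``exactly two relevant maximal ideals.'' The paper's computation (with $16$ indeterminates after normalization) yields $60$ residue class fields: $28$ of degree~$4$, $20$ of degree~$8$, $4$ of degree~$12$, and $8$ of degree~$24$. The degree-$12$ and degree-$24$ fields are discarded because they have no real embeddings. The degree-$8$ fields do have real embeddings, and eliminating them requires a genuine geometric argument: one checks that the squared distance between columns $3$ and $4$ (the apices of the two triangles on edge $\{1,2\}$) falls outside $[0,3]$ for every real embedding, which is impossible for equilateral triangles of side~$1$. Only then do the $28$ degree-$4$ fields survive; writing their Gram matrices over $\mathbb{Q}[\sqrt 5]$ collapses them to $1+6$ matrices, the $6$ forming a single $A$-orbit. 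Your proposal skips this whole elimination step, which is the actual content of the lemma.

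Second, your argument for $\delta(a)=\mathrm{diag}(1,-1,-1)$ is not valid. Knowing $\delta(d)=-I_3$ only forces $\delta(ad)=-\delta(a)$; it does not pin down which of the three trace classes $\delta(a)$ lies in. That $a$ acts with trace $-1$ (rather than $+1$ or $-3$) is a fact about the specific solutions and has to be read off the computed coordinate matrices, as the paper does.
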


\begin{proof}
 1.) (Setting up the equations and solving them formally)
 Let $M\in \R^{3 \times 12}$ be the coordinate matrix. If the first
 column is zero, then so is the  twelfth, since $d$ interchanges 1 and
 12. But we are dealing with the case that all twelve vertices are
 pairwise different. Hence the first column is not zero. Because of
 the action of ${\rm O}_3(\R )$ we may assume  that it is of the form
 $(*,0,0)^{tr}$. There are two cases to be discussed: the special
 case, where  the first two columns are linearly dependent, and the
 generic case, where the first two columns are   linearly
 independent. In the first case the second column is of the same type
 as the first and their  equations, the intertwining condition
 (linear!) and the edge length condition (quadratic), quickly   lead
 to a contradiction. So we only have to deal with the generic
 case. Again because of the action   of the orthogonal group, we may
 assume that the second column is of the form  $(*,*,0)^{tr}$. Taking
 the action of $d$ into account, we obtain a system of 30 quadratic
 equations in 16 indeterminates.  The primary decomposition of the
 relation ideal yields the following list of residue class fields: 
 28 of degree 4, 20 of degree 8, 4 of degree 12, and 8 of degree 24. \\
 2.) (Analysis of the solutions) The 8 residue class fields of degree
 24 have no real embeddings  and must therefore be discarded. The same
 applies to the 4  residue class fields of degree 12.  Each of the 20
 residue class fields of degree 8 does have real embeddings, but none
 of them leads  to a positive semidefinite {\sc Gram}-matrix by the
 following argument: We look at the difference of the   3rd and 4th
 column of $M$. These two columns are the remaining vertices of the
 two triangles of   the icosahedron formed by the columns or rather
 vertices 1,2,3 and 1,2,4. Since the triangles are  equiangular of
 edge length 1 the squared distance  of 3 and 4 must lie between  0
 and 3. It so happens  that for all embeddings of the present cases
 this distance gets a negative value or a value bigger than 3, 
 which is impossible.\\
 Finally the 28 solutions of degree 4 all have real embeddings and
 lead to formal {\sc Gram}-matrices  over $\Q [x]/(x^2-5)$, which  for
 each of the two real embeddings yield positive semidefinite   {\sc
   Gram}-matrices. More precisely, the traces of the 24 formal {\sc
   Gram}-matrices have   minimal polynomial $\lambda^2-15\lambda+45$
 (four times) and $\lambda^2-15\lambda+269/5$ (24 times).   Writing
 them over the same field makes the 4 equal, i.e. gives one  {\sc
   Gram}-matrix over  $\Q [x]/(x^2-5)$, and the other 24 solutions
 yield 6  {\sc Gram}-matrices. The first one is fixed  by the
 operation of $A$, the other six form one orbit. In both cases the two
 real embeddings lead to positive semidefinite {\sc Gram}-matrices
 with distinct   real traces. 
\end{proof}

\begin{lemma}\label{1,1}
  In case $\delta (a) = {\rm diag}(1,-1,-1)$ there are
  $8$ formal {\sc Gram}-matrices and, up to equivalence, $19$
real  {\sc Gram}-matrices, with details listed in the following table:
\[
\begin{array}{|c|c|c|c|c|}
\hline
\mbox{Stabilizer in } A&  d_G, r_{1,G} & r_G & r_{f,G}&\mbox{cf. also La.}\\
 \hline
 C_2 \times A_5&2,\quad 2 &2&2  &\mbox{\ref{2,3}, \ref{3,2}}\\
 \hline 
 C_2 \times D_{10}&2,\quad 2 &2&2  &\mbox{\ref{2,3}, \ref{3,2}} \\
 \hline 
 C_2 ^2    & 2,\quad 2 & 2& 2&\mbox{\ref{3,2}}\\
 \hline
C_2 \times D_{10}&4,\quad 2 &2&2 &\mbox{\ref{2,2}, \ref{3,2}}\\
   \hline
 C_2 \times D_{6}&4,\quad 4 &2&2&\mbox{\ref{2,2}, \ref{3,2}} \\
   \hline  
{C_2}^2&24,\quad 10 &6&3&\mbox{\ref{3,2}} \\
   \hline 
{C_2}^2&30,\quad 18 &6&1& -\\
   \hline 
C_2&172,\quad 48 &20&5& -\\
   \hline 
\end{array}
\]
\end{lemma}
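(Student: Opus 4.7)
The plan is to follow the same paradigm as in the preceding lemmas of this section: parametrize the coordinate matrices $M \in \R^{3 \times 12}$ that intertwine $\pi|_{\langle a\rangle}$ with the given $\delta$, impose the 30 edge-length equations, and analyze the resulting ideal via primary decomposition, keeping those associated primes which satisfy the relevance conditions of Definition~\ref{formalGram}.

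First I would set up the coordinate matrix. The permutation $a$ is a product of six disjoint transpositions and has no fixed points, so $\pi(a)$ has $+1$- and $-1$-eigenspaces of dimension $6$ each. The intertwining relation $\delta(a)\cdot M=M\cdot \pi(a)$ with $\delta(a)=\operatorname{diag}(1,-1,-1)$ forces the first row of $M$ to lie in the $+1$-eigenspace of $\pi(a)$ and the last two rows to lie in the $-1$-eigenspace. The center-of-mass condition eliminates one further parameter from the first row. The centralizer of $\delta(a)$ in $O_3(\R)$ is $\{\pm1\}\times O_2(\R)$, and I would use its $O_2(\R)$-factor to normalize one entry of the $2\times 12$ lower block to zero, leaving roughly $16$ free parameters.

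Next, the $30$ edge-length equations split into orbits under $\langle a\rangle$ of lengths $1$ and $2$, producing a manageable system of quadratic polynomials that generate an ideal $I$ in these indeterminates. I would compute a Groebner or involutive basis for $I$ and extract its primary decomposition. For each resulting associated maximal ideal $m$ I would verify the three relevance conditions: the rank of $G(m)=M^{tr}M$ is at most $3$ (automatic by construction, once one checks that the columns span at most a three-dimensional subspace), the $12$ columns of $M$ are pairwise distinct, and at least one real embedding yields a positive semidefinite Gram-matrix. I would then compute the full stabilizer of $G(m)$ in $A$ via direct stabilizer computation on the formal level, sorting the surviving formal Gram-matrices into the eight rows of the table according to this stabilizer. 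The top six rows correspond to formal Gram-matrices which have strictly larger symmetry and therefore must coincide with the ones already produced in Lemmas~\ref{2,3},~\ref{2,2},~\ref{3,2} referenced in the last column, providing a consistency check across cases.

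Finally I would count real equivalence classes. For each of the eight formal Gram-matrices, the decomposition group $D(G(m))\le N_A(\langle a\rangle)$ acts on the set of relevant real embeddings, and $r_{f,G}$ equals the number of orbits; summing the last column of the table gives $2+2+2+2+2+3+1+5=19$, matching the claim. The main obstacle is purely computational and concentrated in the bottom two rows of the table, in particular the case with $d_G=172$, where one needs to extract from the Groebner basis a single prime of very large degree, verify that its residue field has exactly $48$ real embeddings of which $20$ produce positive semidefinite matrices, and then decide that these fall into exactly $5$ orbits under the decomposition group. This symbolic computation, infeasible by hand, is the essential numerical-algebraic content of the lemma and is carried out using the MAGMA and GINV tools mentioned in the introduction; the intermediate data are made available in the accompanying files referenced in~\cite{Home}.
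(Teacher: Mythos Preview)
Your approach is essentially the paper's: parametrize $M$ via the $\pm1$-eigenspaces of $\pi(a)$, mod out by the centralizer of $\delta(a)$ in $O_3(\R)$ and the center-of-mass condition, compute the primary decomposition of the resulting ideal, and check relevance and stabilizers. The one substantive difference is in how far the normalization is pushed. You stop at roughly $16$ parameters; the paper warns that this is a ``difficult case'' and reduces further to $13$ variables by exploiting the cycle structure of $a$ geometrically: since $(1,2)$ is simultaneously a $2$-cycle of $a$ and an edge, the first column can be forced to $(*,\tfrac12,0)^{tr}$; since $\{3,4\}=\omega(\{1,2\})$ is the orthogonal diagonal, column $3$ becomes $(*,0,*)^{tr}$; and the pair of cycles $(9,11),(10,12)$ (again an edge and its orthogonal diagonal) gives one more reduction. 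Without these extra cuts the primary decomposition --- which in the paper already produces $72$ maximal ideals with residue fields of degree up to $688$ before passing to Gram-matrices --- may be computationally out of reach. Otherwise your outline, including the use of $N_A(\langle a\rangle)/\langle a\rangle$ acting via field automorphisms to pass from $r_G$ to $r_{f,G}$, matches the paper's argument.
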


\begin{proof}
Since  this is  a  difficult  case, it  is  necessary  to prepare  the
computation of  the coordinate matrix  $M \in \R^{3 \times  12}$ well.
First of all  the first row lies in the  $6$-dimensional eigenspace of
$\pi (a)$ for  the eigenvalue 1, and  the second and third  row in the
$6$-dimensional  eigenspace for  the eigenvalue  $-1$. Hence  we start
with 18 indeterminates. The first column  of $M$ cannot be of the form
$(*,0,0)^{tr}$,  because otherwise  it would  be equal  to the  second
column   contradicting  our   assumption   of   having  12   different
vertices. By  applying a two  dimensional
rotation to modify  the last
two rows  without changing the  {\sc Gram}-matrix, we may  assume that
the  first  row  is  of  the form  $(*,*,0)^{tr}$.  Since  $\{1,  2\}$
represents an edge  and hence is of square length  1, we conclude that
the   first   column    can   be   chosen   to   be    of   the   form
$(*,1/2,0)^{tr}$. This  reduces the number  of variables from 3  to 1.
The difference of  columns 3 and 4 represents  the diagonal orthogonal
to  the edge  $\{  1, 2\}$,  proving  that  column 3  is  of the  form
$(*,0,*)^{tr}$ and  hence reducing the  number of variables by  1. The
reason why this  worked so well is  that the first two  cycles of $a$,
namely   $(1,2),  (3,4)$   represent  an   edge  and   its  orthogonal
diagonal.  The same  applies to  the last  two cycles  $(10, 12),  (9,
11)$. Since the  action of the two dimensional orthogonal  group is no
longer  available, we  get a  reduction by  1 only  for the  number of
variables. Finally, the center of mass condition gives us that the sum
of the  coefficients of the  first row is 0,  so that we  have another
reduction by one variable and end up with 13 variables.
\\
The rather time consuming computation of the associated prime ideals
with MAGMA leads to 72 maximal ideals with residue class fields of
degree 2 (48 times), 4  (10 times), 6 (4 times) , 8 (4 times),  16 (2
times), 48 (2 times), 120 (once), and 688 (once). We shall treat each
degree separately. 
\\
\underline{Degree 2}: Of the 48 residue class fields only 20 yield a
solution with 12 different  vertices. The traces of the 20 formal {\sc
  Gram}-matrices have three different minimal polynomials 
over the rationals with the following roots:  $15/2\pm (3/2)\sqrt{5}$ (4 times), 
$15/2\pm (7/10)\sqrt{5}$ (8 times), and $71/10\pm (5/6)\sqrt{5}$ (8
times). The formal {\sc Gram}-matrices with the  same minimal polynomial also have the same orbit lengths under the action of $A$, namely $1, 6, 30$.
Since the centralizer $C_A(a)=\langle a, b, d\rangle$ of $a$ in $A$
acts on the set of all formal $a$-invariant {\sc Gram}-matrices, its
orbits must be of length 1 in the first case, and of length 2 in the
second and third cases. Now writing the {\sc Gram}-matrices in terms
of polynomials of  their traces, we can compare them and find that our
list of 20 reduces to a set of 5 different {\sc Gram}-matrices, namely
one with automorphism group $A$, an orbit of 2 with automorphism group 
isomorphic to $C_2\times D_{10}$, and an orbit of 2 with automorphism
group ${C_2}^2$.  The first two {\sc Gram}-matrices have shown up
already in Lemma~\ref{2,3}, where their occurrence was already
predicted via the linear action of $d$ which is by multiplication by
$-1$,  as one easily checks.   Also  the linear action of the other
elements in the {\sc Sylow} 2-subgroups of the stabilizers  are easily
computed, cf. the comments on the automorphism groups, thus justifying
the entries in the last column of the table.
\\
\underline{Degree 4}:  Of the 10 residue class fields only 8 yield a solution with 12 different 
vertices. Similarly as in the degree-2-case one finds exactly two
non-isomorphic {\sc Gram}-matrices,
but unlike in the previous case none of them is positive semidefinite.
\\
\underline{Degree 6}: None of the four residue class fields has a real embedding. 
\\
\underline{Degree 8}: All four residue class fields lead to solutions
with 12 different vertices. All of them have the same minimal
polynomial for the trace and the orbits under $A$ are all of  length
6. The coefficients of the {\sc Gram}-matrices can be  represented as
polynomials in   the traces of the {\sc Gram}-matrices. Another
variable $co$, which can be interpreted as a cosine of a certain angle,
has minimal polynomial over the rationals of degree 2. This can be
used to  factor the minimal polynomial of the trace $t$ in two quadratic
factors and to see which of the  two can be discarded as discussed
below.    At this stage one sees that there are only two different
formal {\sc Gram}-matrices, which fall in one orbit under the
centralizer of $a$ in $A$. So we have to deal with only one formal
{\sc Gram}-matrix. The minimal polynomial of $co$ has two roots,
namely $-2\pm \sqrt{5}$. We conclude $co= -2+ \sqrt{5}$, because the
other root has absolute value bigger than 1. This makes the  minimal
polynomial of $t$ of degree 2 with solutions  
$\frac{9}{2}+{\frac {7}{10}}\,\sqrt {5}\pm \frac{2}{5}\,{5}^{3/4}$,
both of which give valid solutions (with different eigenvalues). A
{\sc Sylow} 2-subgroup of the automorphism group is given by $\langle
a, d\rangle$. Unlike in the other case with automorphism group 
$C_2 \times D_{10}$ the central element $d$ acts with eigenvalues
$1,1,-1$ on the 3-space.\\ 
\underline{Degree 16}:  Both residue class fields lead to solutions
with 12 different vertices. All of them have the same minimal
polynomial for the trace and the orbits under $A$ are all of  
length 10, proving that the two formal {\sc Gram}-matrices fall in one
orbit under the centralizer of $a$ in $A$, so that we only have to
look at one of them.  Again the  entries of {\sc Gram}-matrices can be
written as polynomials of the {\sc Gram} traces,  which generate a
totally real field extension of the rationals of degree 4. However
only two of the real embeddings lead to a positive semi-definite {\sc
  Gram}-matrix.  The {\sc Sylow} 2-subgroup of the automorphism group
is $\langle a, d \rangle$, however $d$ acts  not as scalar matrix on
the real 3-space but with eigenvalues $-1,1,1$. 
\\
\underline{Degree 48}: Both residue class fields lead to solutions
with 12 different vertices. All of them have the same minimal
polynomial for the trace and the orbits under $A$ are all of  length
30. The two minimal polynomials are equal and of degree 12. In both
cases, the $1,1$-entry of the {\sc Gram}-matrix is a primitive element
of the field $F_G$ generated by all entries. Again the minimal
polynomials over the rationals are equal and both of degree 24.
Writing the entries of both  {\sc Gram}-matrices  as
polynomials over these primitive elements makes them equal, so that we
have only one  formal  {\sc Gram}-matrix for the present case.  We
have 10 real embeddings of the field $F_G$, of which only 6 lead to a
positive semidefinite real  {\sc Gram}-matrix. The stabilizer of the
formal  {\sc Gram}-matrix in $A$ is  $\langle a, bd\rangle \cong
V_4$. Clearly, $a$ and $bd$ fix each one of the resulting real  {\sc
  Gram}-matrices, but $d$ permutes them in three 2-cycles. Note,
$N_A(\langle a, bd\rangle )/\langle a, bd\rangle$ is isomorphic to the
{\sc Galois} group of $F_G$ over the field generated by the trace of
the  {\sc Gram}-matrix, the latter acting equivalently on the set of
the 6 relevant real roots. So we end up with $3$ inequivalent
embeddings. The  eigenvalues of $bd$ and $abd$ in the action on real
3-space can be read off from
$a \mapsto (1,-1,-1), bd \mapsto (1,1,-1), abd \mapsto  (1,-1,1)$. 
\\
\underline{Degree 120}: The residue class field leads to one formal
solution with 12 different vertices and the  orbit of the formal {\sc
  Gram}-matrices under $A$ is of length 30 with stabilizer $\langle
a,b \rangle \cong V_4$.  The $1,7$-entry of the  {\sc Gram}-matrix
already generates the field $F_G$ generated by all entries, which
turns out to be of degree 30 over the rationals. This field has 18
real embeddings of which only 6 lead to positive semidefinite real 
{\sc Gram}-matrices. The trace $t$ of the formal {\sc Gram}-matrix
generates a field extension of degree 5, so that the field extension
$(F_G/ \Q [t])$ is of degree 6. It turns out to be a {\sc Galois}
extension  with {\sc Galois} group isomorphic to  $N_A(\langle a,
b\rangle )/\langle a, b\rangle \cong C_6$. Indeed, we end up with one
real {\sc Gram}-matrix up to equivalence. On 3-space all three
elements $a, b, ab$ of the  stabilizer act linearly with trace $-1$.
\\
\underline{Degree 688}: 
The residue class field leads to one formal solution with 12 different
vertices. The  orbit of the formal {\sc Gram}-matrices under $A$ is of
length 60 with stabilizer $\langle a\rangle  
\cong C_2$.  The sum of the $1,1$-entry and the $5,5$-entry of the 
{\sc Gram}-matrix already generates the field $F_G$ generated by all
entries. It has degree 172 and has 48 real
embeddings of which only 20 lead to positive semidefinite real 
{\sc Gram}-matrices. The trace $t$ of the formal {\sc Gram}-matrix
generates a field extension of 
degree 43, so that the field extension  $(F_G/ \Q [t])$ is of degree 4. It turns out to be
a {\sc Galois} extension  with {\sc Galois} group isomorphic to 
$N_A(\langle a\rangle )/\langle a\rangle \cong V_4$. Indeed,
the centralizer of $a$ in $A$ is $\langle a,b,d\rangle \cong C_2^3$ and subdivides
the twenty {\sc Gram}-matrices into 5 orbits of length $4=|V_4|$.
\end{proof}

\bn  Whenever  in   the  proofs  of  the  subsequent   lemmas  a  {\sc
  Gram}-matrix shows up  which is invariant under $a$, we  know by the
preceding lemmas that  this {\sc Gram}-matrix occurred  already in one
of the Lemmas~\ref{1,1}, \ref{1,3} and its isomorphism type can easily
be read off  from the various invariants used. On  the other hand, the
previous lemmas also  tell us, in which of the  subsequent lemmas such
{\sc Gram}-matrices will occur.

\begin{lemma}\label{3,2}
  In case $\delta (ad) = {\rm diag}(-1,1,1)$ there are $12$ formal
  {\sc Gram}-matrices and, up to equivalence, $29$
  real  {\sc Gram}-matrices, with details listed in the following
  table\footnote{The subdivision of the table reflects the two cases in the proof.}:
\[
\begin{array}{|c|c|c|c|c|}
\hline
\mbox{Stabilizer in } A&  d_G, r_{1,G} & r_G & r_{f,G}&\mbox{cf. also La.}\\
 \hline
 C_2 \times A_5&2,\quad 2 &2&2  &\mbox{\ref{2,3}, \ref{1,1}}\\
 \hline 
 C_2 \times D_{10}&2,\quad 2 &2&2  &\mbox{\ref{2,3}, \ref{1,1}}\\
 \hline 
 {C_2}^2   & 2,\quad 2 & 2& 2 &\mbox{\ref{1,1}}\\
   \hline 
D_{10}&2,\quad 2 &2&2 &- \\
\hline
D_{6}&2,\quad 2 &2&2 &- \\
  \hline
{C_2}^2&24,\quad 10 &6&3 &\mbox{\ref{1,1}} \\
 \hline
 C_2& 36,\quad 12 &8&4 & -\\
 \hline \hline
C_2&168,\quad 40&24&6 &-\\
\hline
C_2 \times D_{6}&4,\quad 4 &2&2 &\mbox{\ref{1,1}, \ref{2,2}} \\
   \hline 
   C_2 \times D_{10}&4,\quad 2 &2&2 &\mbox{\ref{1,1}, \ref{2,2}} \\
   \hline
 {C_2}^2&8,\quad 4 &2&1 & \mbox{\ref{2,1}, \ref{1,3}}\\
   \hline  
  D_{10}&4,\quad 2&2&1 & -\\ \hline 
\end{array}
\]
\end{lemma}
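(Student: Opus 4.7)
The plan is to follow the general scheme used in the preceding lemmas (in particular Lemmas~\ref{2,3} and~\ref{1,1}), adapted to the new linear action. A direct computation from the generators in Section~\ref{SymLin} shows that $ad$ has cycle type $(1,10)(2,12)(3,11)(4,9)$ with fixed points $5,6,7,8$, so the permutation matrix $\pi(ad)$ has a $4$-dimensional $(-1)$-eigenspace and an $8$-dimensional $(+1)$-eigenspace. The intertwining condition $\delta(ad)\cdot M = M\cdot\pi(ad)$ with $\delta(ad) = \mathrm{diag}(-1,1,1)$ forces the first row of the coordinate matrix $M$ into the $(-1)$-eigenspace (4 parameters) and rows 2 and 3 into the $(+1)$-eigenspace (8 parameters each), for 20 indeterminates in all. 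The center-of-mass condition is automatic for the first row and cuts one parameter off each of rows 2 and 3, while the residual $O(1)\times O(2)$-action lets me rotate in the $(+1)$-plane to zero out one coordinate of a chosen column (e.g.\ placing vertex~5 on an axis) and to use one edge-length condition to fix one further parameter.

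As in Lemma~\ref{2,3}, a natural case distinction arises from the normalization, explaining the subdivision in the table. After using the orthogonal freedom to put vertex~1 into the $(-1)$-direction (necessarily with non-zero first coordinate, since otherwise $V_1 = V_{10}$), a second bifurcation comes from the relative position of another distinguished column, e.g.\ vertex~3, with respect to the fixed-point plane containing $V_5,V_6,V_7,V_8$: a generic case in which a certain $2\times 2$-minor is invertible, and a degenerate case in which it vanishes. Each setup produces a zero-dimensional polynomial system with the $30$ quadratic edge-length equations, to be handled separately by involutive and {\sc Groebner}-basis computations in GINV and MAGMA.

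Having computed primary decompositions, I would filter the maximal associated primes through the three conditions of Definition~\ref{formalGram}: rank of the image Gram-matrix at most $3$, existence of a positive semidefinite real embedding, and pairwise distinctness of the $12$ columns of $M$. For every surviving ideal I would then compute the stabilizer of its formal Gram-matrix in $A$ and use the remark following Definition~\ref{formalGram} to read off the $N_A(\langle ad\rangle)$-orbit length and hence $r_{f,G}$. Formal Gram-matrices whose automorphism group also contains $a$ or $d$ have already been classified in Lemmas~\ref{1,1}, \ref{1,3}, or~\ref{2,3}, and will reappear with matching invariants $(d_G,r_{1,G},r_G,r_{f,G})$; cross-referencing these with the columns of the present table accounts for most entries at small field degree and provides a stringent consistency check.

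The main obstacle is the cases in which $\langle ad\rangle$ is the full stabilizer, above all the degree-$168$ row in the second block: here no earlier lemma restricts the solutions and the full zero-dimensional ideal must be decomposed. I would verify that the trace $t$ of the Gram-matrix generates the intermediate field of degree $42$ predicted by the trace relation in Table~\ref{endgue}, so that $F_G/\Q(t)$ has degree $4$; then compute the $N_A(\langle ad\rangle)$-orbit structure to identify the six relevant real embeddings out of the forty; and certify positive semidefiniteness both numerically and via Sturm sequences. Summing the contributions $r_{f,G}$ over the twelve surviving formal Gram-matrices, grouped according to the two cases of the proof, yields the promised totals of $12$ formal and $29$ real Gram-matrices listed in the table.
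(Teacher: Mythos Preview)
Your overall scheme --- set up the intertwining space, count $4+8+8=20$ parameters, reduce by the $O(1)\times O(2)$-action and the centre-of-mass condition, solve a zero-dimensional system, filter by Definition~\ref{formalGram}, and cross-reference with Lemmas~\ref{1,1}, \ref{1,3}, \ref{2,3} --- is exactly what the paper does. Two points, however, differ from the paper and one of them is a real obstruction.

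\textbf{The case split.} Your bifurcation into a ``generic'' and a ``degenerate'' case via a $2\times 2$-minor is not how the paper (and the table's subdivision) arises. The paper uses that the two $2$-cycles $(3,11)$ and $(4,9)$ of $ad$ are both \emph{edges}; the edge-length condition then forces the first-row entries $x_3$ and $x_4$ to satisfy $x_3^2=x_4^2=1/4$. The $O(1)$-freedom on the first row fixes $x_3=1/2$, but $x_4=\pm 1/2$ remains a genuine dichotomy: Case~1 ($x_4=1/2$) produces the first seven rows, Case~2 ($x_4=-1/2$) the last five. Your minor-vanishing criterion does not obviously reproduce this split. (Your normalization paragraph also mixes vertex~$5$ and vertex~$1$; note that $V_1$ does \emph{not} lie on the first axis in general, since $1$ is moved by $ad$; the paper instead rotates a fixed-point column into the form $(0,*,0)^{tr}$.)

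\textbf{Primary decomposition in Case~2.} You propose computing primary decompositions in both cases. The paper reports that in Case~2 the residue class algebra has $\Q$-dimension $928$ and that the associated primes could \emph{not} be obtained automatically. Instead they compute the minimal polynomial of the trace of the Gram-matrix (degree~$70$), factor it into pieces of degree $2,12,42,4,4,4,2$, and treat each factor separately; only in this way does the degree-$168$ field (from the degree-$42$ trace factor) become tractable. Your plan would stall at precisely the step you flag as the ``main obstacle'': a direct primary decomposition of the Case~2 ideal is the wrong tool here, and replacing it by the trace-factoring trick is what makes the proof go through.
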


\begin{proof}
The coordinate matrix has its first row in the (row) eigenspace of the
permutation matrix for $ad$ for the eigenvalue $-1$, which is of
dimension~4, the second and third row lie in the  eigenspace for the
eigenvalue $1$, which is of dimension~8. Hence we have 20 variables
for  the coordinate matrix. The fourth column is of the form
$(0,*,*)^{tr}$ and can be  assumed to be of the form
$(0,*,0)^{tr}$. Because the cycle $(3,11)$ of $ad$ consists of the
vertices of an edge of the icosahedron, the first entry $x_3$ of the
third column satisfies $x_3^2=1/4$. W.l.o.g. we may assume
$x_3=1/2$. (This makes the first entry of the eleventh column $-1/2$.)
Also $(4,9)$ is a cycle of $ad$ and  represents an edge of the
icosahedron at the same time. Since we are no longer allowed  to
multiply the first row of the coordinate matrix by $-1$, we now have
two possibilities  for the first entry $x_4$ of the forth column,
namely case 1:  $x_4=1/2$ and case 2:  $x_4=-1/2$.  In both cases the
number of remaining variables is 18. This number can be reduced to 16,
since we have the center of mass condition by which the sum of the
entries in the second and third row is zero, which are two new
equations unlike to the first row. From here on we treat the two cases
separately.\\ 
\underline{Case 1}: We get 63 prime ideals in the primary
decomposition.  All except for 4 prime ideals lead to icosahedra with
12 different vertices. The residue  class fields in these 59 relevant
cases have  degree 
2 (20 times), 4 (28 times), 6 (4 times), 8 (2 times), 48 (2 times), 96 (once), 144 (2 times).\\
\underline{Degree 2, case 1} All {\sc Gram}-matrices which come up are
also fixed by $a$ or a  conjugate of $a$ such as $b$ or $ab$. Hence
they are known already. To be more precise,  if one writes the {\sc
  Gram}-matrices in terms of their traces, one ends  up with 5
different formal {\sc Gram}-matrices coming in three orbits under the
centralizer of $ad$ in $A$. Since we have treated already all
$a$-invariant {\sc Gram}-matrices, these orbits must  correspond to
the first three  {\sc Gram}-matrices in Lemma~\ref{1,1}.\\ 
\underline{Degree 4, case 1} In all 28 cases the entries of the formal
{\sc Gram}-matrices can  be written in terms of the traces
of their  {\sc Gram}-matrices so that equivalence becomes easy  to
check. The rational minimal polynomials of the traces are all of
degree 2. There are 8, 8, 8, resp.\ 4 cases where the $A$-orbit length
of the {\sc Gram}-matrices is 30, 10,  12, resp.\ 20. The 8 of orbit
length 30 yield two $C_A(ad)$-orbits of length 2, one with  stabilizer
$\langle ad,b\rangle$, hence again isomorphic to the third in
Lemma~\ref{1,1}, the  second orbit yielding only indefinite  {\sc
  Gram}-matrices. The 8 {\sc Gram}-matrices with $A$-orbit length 10
boil down to an orbit of two, which, however, are indefinite. The 8 of
orbit length 12 give 4 different  {\sc Gram}-matrices, which form one
orbit under  $C_A(ad)$. Both real  embeddings are positive
semidefinite, thus giving a new formal {\sc Gram}-matrix whose
stabilizer is isomorphic to $D_{10}$. Finally, the 8 of orbit length
20 give 4 different  {\sc Gram}-matrices, which form one orbit under
$C_A(ad)$. Both real  embeddings are positive semidefinite, thus
giving a new formal {\sc Gram}-matrix whose  stabilizer is isomorphic to $D_{6}$.\\
\underline{Degree 6, case 1} There are no real solutions. More
precisely the traces of the two  {\sc Gram}-matrices have a rational
minimal polynomial of degree 2 with negative discriminant.\\ 
\underline{Degree 8, case 1} This case yields two real {\sc
  Gram}-matrices fixed under $ab$. Both are  not positive semi-definite
and hence  ruled out.\\ 
\underline{Degree 48, case 1} The {\sc Gram}-matrices are fixed under
$b$, which is conjugate to $a$. Hence, this case was treated
earlier. Indeed, the two  {\sc Gram}-matrices are equivalent to the
ones  obtained in Lemma~\ref{1,1} with automorphism group ${C_2}^2$
and $d_G=24$.\\ 
\underline{Degree 96, case 1}  Here no real solution exists, because
the rational minimum  polynomial of the trace of the  {\sc
  Gram}-matrix is of degree 6 without real roots.\\ 
\underline{Degree 144, case 1} There are two {\sc Gram}-matrices over
a field of degree 36 over the  rationals. The stabilizer in $A$ is
generated by $ad$ in both cases. Since the normalizer of  $\langle
ad\rangle $ is of order 8, one of these two solutions gives us already
an orbit of four  {\sc Gram}-matrices. This can only be the case if a
subgroup of order 2 of  $N_A(\langle ad\rangle )/\langle ad\rangle$ is
induced by field automorphisms. In any case, it suffices to look at
the first formal {\sc Gram}-matrix. Its $(7,1)$-entry can be chosen as
primitive element of the field. It allows us to look at the real
completions, of which there  are 12 with 8 of them yielding a positive
definite {\sc Gram}-matrix. It turns out that  $b \in A$ induces the
expected field automorphism because it permutes the 8 real places in 
4 transpositions, whereas $d$ for instance does not. (Ordering the 8
real roots according to their increasing size, $b$ yields the
permutation $(1,2)(3,6)(4,5)(7,8)$.) Hence we end  up with 4 real {\sc
  Gram}-matrices up to equivalence.\\   

\noindent
\underline{Case 2}: Using the MAGMA {\sc Groebner}-basis routine and
the Involutive basis routine we see that the residue class algebra of
our equations is of $\Q$-dimension 928. It seems to be very hard  to
obtain automatically the associated prime ideals. Therefore we compute
the minimal polynomial  of the trace of the {\sc Gram}-matrix, which
turns out to be of degree 70 only. It factors into irreducible factors
of degrees 2, 12, 42, 4, 4, 4, 2. The first factor has no real roots
and has therefore to be discarded.  The remaining ones have 6, 12, 4,
2, 2, resp.\ 2 real roots. We discuss each factor separately. Note, at
this stage we do not yet know whether each factor leads to a maximal
ideal.\\
\underline{Degree-12-factor case 2} The residue class algebra is of
$\Q$-dimension 96, leads to a {\sc Gram}-matrix over a field of degree
24  with stabilizer $\langle ad, ab \rangle\cong {C_2}^2$ under the
action of $A$. Clearly, this was treated earlier in
Lemma~\ref{1,1}. Note this form also came up in Case 1, degree 48
above in this proof.\\ 
\underline{Degree-42-factor case 2} The residue class algebra is of
$\Q$-dimension 672, leads to a  {\sc Gram}-matrix over a field $F_Q$
of degree 168 as follows: The sum of the first and third  diagonal
element of the {\sc Gram}-matrix turns out to have an irreducible
minimal polynomial of degree $168=672/4$ and all the other entries of
the {\sc Gram}-matrix are polynomials in this element. The stabilizer
of the {\sc Gram}-matrix is $\langle ad\rangle\cong C_2$ under the
action of $A$. The field $F_Q$ has 40 real embeddings, 24 of which
lead to a  positive semidefinite real {\sc Gram}-matrix. Since there
is only one formal {\sc Gram}-matrix in this case, the factor group
$N_A(\langle ad\rangle )/\langle ad\rangle \cong V_4$ can be embedded
into the automorphism group of $F_Q$ over the rationals and
distributes the relevant real  embeddings into orbits of isometric
real {\sc Gram}-matrices, namely 6 orbits of 4 embeddings
each. Indeed, these real matrices have 6 different traces, we end up
with 6 classes of real {\sc Gram}-matrices in this case. We note that
this case was computationally hard.\\ 
\underline{First degree-4-factor case 2} The residue class algebra is
of $\Q$-dimension 32 and decomposes into a direct sum of two fields
of degree 16 over the rationals. Both fields  lead to a  {\sc
  Gram}-matrix over a field $F_Q$ totally real of degree 4 and have
$C_2 \times D_6$ as stabilizers in $A$. Of the four real  embeddings
only two lead to positive semidefinite {\sc Gram}-matrices. The  two
specializations of one formal {\sc Gram}-matrix are
inequivalent. Therefore, it is clear that  the two formal {\sc
  Gram}-matrices form an orbit under the normalizer of the stabilizer
$\langle ad, d\rangle$ in $A$. In particular only one  {\sc
  Gram}-matrix has to be considered.  The traces of the linear actions
on 3-space are $-1$, $1$, $1$, so that this form should come up in
Lemmas~\ref{1,1} and \ref{2,1}.  
\\
\underline{Second degree-4-factor case 2} The residue class algebra is
of $\Q$-dimension 32 and  decomposes into a direct sum of four fields
of degree 8 over the rationals. Each field leads to a {\sc
  Gram}-matrix over a field $F_Q\cong \Q [5^{1/4}]$ of degree 4 and
have $C_2 \times D_{10}$ as stabilizers in  $A$, all of them
containing $a$ and therefore have come up earlier (in the case
$a\mapsto (1,-1,-1)$). From the earlier case all the formal
{\sc Gram}-matrices must be equivalent. \\ 
\underline{Third degree-4-factor case 2} The residue class algebra is
of $\Q$-dimension 32. It is already a field and  leads to a  {\sc
  Gram}-matrix over a field $F_Q$ of degree 8 and has $\langle a,
d\rangle \cong {C_2}^2$ as stabilizer in $A$. Therefore, it has come
up earlier in the case $a \mapsto (-1,+1,+1)$, cf. Lemma~\ref{1,3}.\\ 
\underline{Degree-2-factor case 2} The residue class algebra leads to
four residue class fields of degree 4 over the rationals. The four
resulting {\sc Gram}-matrices all have stabilizer isomorphic to
$D_{10}$ and form four different fixed points for $ad$ in accordance
with the table of marks of $A$. They fall  into two orbits under the
action of $\langle a\rangle$, but lie in one orbit under $A$. The
field $F_Q$, which is isomorphic to $\Q[5^{1/4}]$, has two real
embeddings. These yield isometric icosahedra, the normalizer of the
stabilizer modulo the  stabilizer acts on the field interchanging the
two real embeddings.  
\end{proof}

\begin{lemma}\label{2,2}
  In case $\delta (d) = {\rm diag}(-1,1,1)$ there are two formal {\sc
    Gram}-matrices and four real  {\sc Gram}-matrices, with details
  listed in the following table: 
\[
\begin{array}{|c|c|c|c|c|}
\hline
\mbox{Stabilizer in } A&  d_G, r_{1,G} & r_G & r_{f,G}& \mbox{ cf. also La.}\\
 \hline
C_2 \times D_{10}&4,\quad 2 &2&2&\mbox{\ref{1,1}, \ref{3,2} }\\
   \hline
 C_2 \times D_{6}&4,\quad 4 &2&2&\mbox{\ref{1,1}, \ref{3,2} }\\
   \hline  
\end{array}
\]
\end{lemma}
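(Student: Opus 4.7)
The plan is to mirror the strategy used in Lemmas \ref{2,3} and \ref{3,2}, namely set up the coordinate matrix, exploit the centralizer of $\delta(d)$ in $O_3(\R)$ to cut down parameters, and then analyze the primary decomposition of the resulting quadratic ideal. The element $d$ acts on the twelve vertices as a product of six transpositions (interchanging combinatorially opposite vertices), so the $(+1)$-eigenspace and the $(-1)$-eigenspace of $\pi(d)$ are each six-dimensional. Since $\delta(d)={\rm diag}(-1,1,1)$, the intertwining condition forces the first row of $M \in \R^{3 \times 12}$ into the $(-1)$-eigenspace of $\pi(d)$ (which is automatically traceless, so the center-of-mass condition for the first row is free), while the second and third rows lie in the $(+1)$-eigenspace, each contributing one new linear equation from the center-of-mass condition. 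This leaves $6+5+5=16$ indeterminates.

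Next, I would use the centralizer $O_1(\R)\times O_2(\R)$ of $\delta(d)$ in $O_3(\R)$ to normalize the coordinate matrix. As in Lemma \ref{3,2}, I would pick a $d$-orbit $\{i,j\}$ that represents an edge of the icosahedron and fix the ambiguous sign of the first entry, and rotate in the $(y,z)$-plane to zero out one coordinate of a carefully chosen second column. Two such reductions should bring the count down to $13$ (or thereabouts) indeterminates, after which the $30$ quadratic edge-length equations can be submitted to MAGMA and GINV for a primary decomposition, exactly as in the other cases.

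The main step is the analysis of the resulting associated primes. I expect the decomposition to split naturally into several residue class fields; the relevance filter (rank $\leq 3$, twelve distinct vertices, existence of a real embedding giving a positive semidefinite matrix) should eliminate everything but two formal Gram-matrices, each defined over a totally real quartic field, one with trace relation $\lambda^{4}-18\lambda^{3}+\tfrac{583}{5}\lambda^{2}-\tfrac{1658}{5}\lambda+\tfrac{9101}{25}$ and stabilizer $C_2 \times D_{10}$, the other with trace relation $\lambda^{4}-26\lambda^{3}+243\lambda^{2}-970\lambda+1397$ and stabilizer $C_2 \times D_{6}$, matching the rows of Table~\ref{endgue}. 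Each contributes two inequivalent real embeddings, giving the asserted four real Gram-matrices.

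Finally, to close the argument I would verify that these two formal Gram-matrices indeed coincide with the $d_G = 4$ entries already produced in Lemmas~\ref{1,1} and \ref{3,2}. The mechanism here is the general principle stated just before Lemma~\ref{3,2}: any Gram-matrix occurring in several of the nine cases of Table~\ref{cases} can be recognized from its invariants (stabilizer, field degree, trace polynomial), and the traces of the linear actions of $a$ and $ad$ on $\R^{3}$ must be consistent with $\delta(d)={\rm diag}(-1,1,1)$ combined with ${\rm diag}(1,-1,-1)$ on $a$ and ${\rm diag}(-1,1,1)$ on $ad$. The main obstacle is the size of the Gr\"obner basis computation in $16$ variables; everything after that is a routine (if tedious) matching exercise against the tables already built up.
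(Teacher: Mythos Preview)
Your setup is fine through the variable count ($6+5+5=16$), but two things go wrong after that.

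First, a concrete slip: you propose to pick a $d$-orbit $\{i,j\}$ that is an edge and use the $x_i^2=1/4$ trick from Lemma~\ref{3,2}. That worked for $ad$ because $ad$ has cycles such as $(3,11)$ and $(4,9)$ that are edges. But every $2$-cycle of $d$ joins combinatorially opposite vertices (distance~$3$), never an edge, so this normalization is unavailable here. The only rigidification you actually have is the $O_1(\R)\times O_2(\R)$ centralizer: a sign on row~$1$ and a rotation in the $(y,z)$-plane, which lets you force one entry of, say, the first column to vanish.

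Second, and more seriously, you assume the resulting ideal is zero-dimensional and that a routine primary decomposition will hand you the two formal {\sc Gram}-matrices. In the paper this is exactly where the argument hits a snag: after normalizing the first column to shape $(*,*,0)^{tr}$ one still gets \emph{infinitely many} coordinate matrices. The infinity is spurious---it comes from the degenerate locus where the first column is already $(*,0,0)^{tr}$, so the $O_2$-action has not been used up---but it forces a case split. In the degenerate case one further normalizes the second column to $(*,*,0)^{tr}$ and obtains a single {\sc Gram}-matrix; in the generic case one gets finitely many {\sc Gram}-matrices directly. The paper then observes that every resulting {\sc Gram}-matrix has $A$-orbit length $6$ or $10$, hence stabilizer containing a conjugate of $a$, and simply looks up which of the already-classified $a$-invariant forms have $d$ acting with trace~$+1$. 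So the cross-reference to Lemmas~\ref{1,1} and \ref{3,2} is not a final verification step as you describe it; it \emph{is} the classification.
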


\begin{proof}
The coordinate matrix has its first row in the (row) eigenspace of the
permutation  matrix for  $d$  for  the eigenvalue  $-1$,  which is  of
dimension 6,  the second and third  row lie in the  eigenspace for the
eigenvalue  $1$, which  is  also of  dimension 6.  Hence,  we have  18
variables  for the  coordinate matrix.  The center  of mass  condition
reduces this number by 2. As usual  the first column can be assumed to
be of the form $(*,*,0)^{tr}$.  However, this leads to an infinite number
of solutions for  the coordinate matrices. The  computation shows that
the  infinity occurs  already  if  the first  column  is  of the  form
$(*,0,0)^{tr}$. This is  a case where the action  of the 2-dimensional
orthogonal group is still in operation, allowing us to assume that the
second column  is of  the form  $(*,*,0)$, which  then results  into 4
solutions only and  just one {\sc Gram}-matrix. As for  the other case
where the second entry of the  first column is non-zero, one gets also
only  finitely  many  {\sc  Gram}-matrices.  All  of  them  fall  into
$A$-orbits of lengths 6 and 10. This shows that the automorphism group
must  contain  an  element  conjugate  to $a$  under  $A$.  Since  all
possibilities for linear  actions of $a$ have  already been discussed,
we can look up there, which cases for $a$ contain $d$ in its presently
discussed linear action.  Note, since $d$  is central in $A$, the full
$A$-orbits show  up in  our computation. This  explains why  the issue
with the infinite number of  solutions for the coordinate matrices did
not show up at the two relevant places.  
\end{proof}

\begin{lemma}\label{2,1}
 In case $\delta (d) = {\rm diag}(1,-1,-1)$ the equational ideal $I$ of Definition~\ref{formalGram} is of dimension 1.
\end{lemma}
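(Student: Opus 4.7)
The setup mirrors the preceding lemmas of Section~\ref{Classify}. Since $\delta(d)=\mathrm{diag}(1,-1,-1)$, the intertwining condition $\delta(d)\cdot M = M\cdot \pi(d)$ forces the first row of the coordinate matrix $M\in\R^{3\times 12}$ to lie in the $(+1)$-eigenspace of $\pi(d)$ and its second and third rows to lie in the $(-1)$-eigenspace. Because $d=(1,12)(3,9)(2,10)(4,11)(5,7)(6,8)$ is a product of six disjoint transpositions with no fixed point, both eigenspaces have dimension $6$, giving $18$ free scalars in $M$ to begin with.

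Next I would trim the parameter count. The center-of-mass condition $\sum_{i=1}^{12}V_i=0$ yields one linear constraint on the first row, while the second and third rows have their column sums vanish automatically because the entries on each $2$-cycle of $d$ are opposite; this drops the count to $17$. The centralizer of $\delta(d)$ in $\mathrm{O}_3(\R)$ contains a copy of $\mathrm{O}_2(\R)$ acting on the last two rows of $M$ without changing $G=M^{tr}M$, so I can normalize one additional entry, leaving $16$ indeterminates.

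Now to the equations. Each $\langle d\rangle$-orbit on the $30$ edges has length exactly $2$: an edge $\{i,j\}$ is fixed by $d$ only if $d$ swaps $i$ and $j$, but the six pairs swapped by $d$ are precisely the combinatorially opposite pairs $\{1,12\},\{3,9\},\{2,10\},\{4,11\},\{5,7\},\{6,8\}$, none of which is an edge. Hence the $30$ edge-length equations reduce to $15$ genuinely distinct quadratic equations in the $16$ remaining indeterminates, built into the ansatz so that the remaining $15$ equations are automatic from $\delta(d)\cdot M=M\cdot \pi(d)$. The naive count $16-15=1$ thus suggests Krull dimension at least $1$.

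The decisive step is a Gr\"obner/involutive basis computation over $\Q$ for the resulting ideal $I$, from which the Hilbert polynomial (or the number of free leading variables in the involutive basis) will read off as $1$. The main obstacle is computational: in contrast to the other eight cases of Table~\ref{cases}, here $R/I$ is not finite-dimensional over $\Q$, and this is precisely the source of the deformable curve investigated in Section~\ref{CurvIco}, so the basis computation is the most expensive of the paper. I would run the involutive basis algorithm of GINV and verify that (i) the leading-monomial ideal leaves exactly one variable free, and (ii) the $15$ chosen edge equations together with the normalization indeed cut out a variety of dimension $1$, as opposed to a higher-dimensional artifact of the $\mathrm{O}_2(\R)$-normalization (which is ruled out since the normalization used up a one-parameter symmetry already factored out at the level of $M$, not $G$). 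The one-dimensionality of $I$ then follows, finishing the proof.
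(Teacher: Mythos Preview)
Your proposal is correct and follows essentially the same route as the paper. Both set up the coordinate matrix with the $(\pm 1)$-eigenspace decomposition for $\pi(d)$, reduce to $16$ indeterminates after the center-of-mass condition and one $\mathrm{O}_2(\R)$-normalization, and then rely on a Gr\"obner/Janet basis computation to certify Krull dimension~$1$. The paper simply reports the resulting Hilbert series
\[
1+16t+121t^2+576t^3+1625t^4+1987t^5+1540t^6+1371t^7+1323t^8+1320t^9+\frac{1320t^{10}}{1-t},
\]
whose eventual constancy of coefficients gives dimension~$1$; note that the $t^2$-coefficient $121=\binom{17}{2}-15$ independently confirms your count of $15$ quadratic edge equations in $16$ unknowns. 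Your explicit observation that $d$ fixes no edge (so the $30$ equations collapse to $15$) together with Krull's height theorem gives a clean a~priori lower bound $\dim\ge 1$ that the paper does not spell out; conversely, the paper adds two remarks you omit---that primality of the ideal could not be established, and that imposing coincidence of any two vertices cuts down to finitely many points---but neither is needed for the bare dimension statement.
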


\begin{proof}
The usual ansatz with one entry in the third row set zero to make the 
equations for the coordinate matrix rigid leads to an ideal of which a
{\sc Groebner}-basis can be computed by MAGMA. This can be turned into 
a {\sc Janet}-basis yielding the following {\sc Hilbert} series:
\begin{multline*} 
1+16t+121t^2+576t^3+1625t^4+1987t^5+1540t^6+1371t^7+1323t^8+\\
1320t^9+\frac{1320t^{10}}{1-t}
\end{multline*}
telling us that the dimension is 1.  We could not prove that the ideal
was  prime.  As   was  to  be  expected,  the   {\sc  Gram}-matrix  of
Lemma~\ref{1,3}  corresponds to  a maximal  ideal containing  $I$. The
assumption that two  vertices coincide leads in each  case to finitely
many maximal ideals containing $I$  so that generically we have twelve
vertices. However,  it is  not clear  at this  stage whether  we have
infinitely many  real solutions leading to  positive semidefinite {\sc
  Gram}-matrices. This will be proved in the next section.  
\end{proof}

\bn
All these lemmas of this section taken together prove Theorem~\ref{Hauptsatz}.

\section{A curve of icosahedra}\label{CurvIco}
In this section we prove the remaining part of Theorem~\ref{Uebersicht},
namely that there are infinitely many $\langle d\rangle$-invariant 
icosahedra. The previous computations allow us to define a vector field,
at least one of its integral curves consists of icosahedra:

\begin{proposition}\label{curve}
There exist $\varepsilon > 0$
and a non-constant real analytic map $\Phi: [0, \varepsilon) \to \R^{3 \times 12}$ 
such that $\Phi(t)$ is the coordinate matrix of
a $\langle d \rangle$-invariant icosahedron
for all but finitely many $t \in [0, \varepsilon)$.
In particular, there exist infinitely many isometry types
of $\langle d \rangle$-invariant icosahedra.
\end{proposition}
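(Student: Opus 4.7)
The plan is to build the deformation curve directly from the variety structure identified in Lemma~\ref{2,1}. I work in the same 16-variable parameterisation used there: let $x = (x_1,\dots,x_{16}) \in \R^{16}$ parameterise the relevant centred coordinate matrices $M(x) \in \R^{3\times 12}$ with $\delta(d) = \operatorname{diag}(1,-1,-1)$ and the usual frame-fixing, and let $f_1,\dots,f_{30} \in \Q[x_1,\dots,x_{16}]$ be the 30 quadratic edge-length polynomials generating the ideal $I$. Lemma~\ref{2,1} already guarantees that $I$ has Krull dimension $1$, and Lemma~\ref{1,3} exhibits a real point on the real variety $V_\R := \{x \in \R^{16} \mid f_i(x) = 0,\ i=1,\dots,30\}$.

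The core step is to exhibit a real analytic parameterisation of a local branch of $V_\R$. Let $J(x)$ denote the $30 \times 16$ Jacobian of $(f_1,\dots,f_{30})$. Since $\dim V(I) = 1$, the singular locus $\{x \in V(I) : \operatorname{rank} J(x) < 15\}$ is a proper, hence finite, subset of $V(I)$. The main obstacle is producing a real point $p_0 \in V_\R$ at which $\operatorname{rank} J(p_0) = 15$: concretely, one evaluates $J$ at the explicit Lemma~\ref{1,3} solution and checks the rank by linear algebra over its field of definition. Should that point turn out to be singular (which is a priori plausible, since it carries the extra symmetry $a$), one uses the Puiseux parameterisation of a real analytic branch of $V_\R$ through it to step onto a nearby smooth real point $p_0$; the existence of such a real branch follows from the $1$-dimensional structure together with the fact that the special point is a real point of $V(I)$.

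Given a smooth real point $p_0$, the real analytic implicit function theorem produces $\varepsilon > 0$ and a real analytic $x : [0,\varepsilon) \to \R^{16}$ with $x(0) = p_0$, $f_i(x(t)) \equiv 0$, and $\dot x(0)$ spanning $\ker J(p_0)$. Equivalently, one defines a smooth vector field on a neighbourhood of $V^{sm}(I)$ whose value at each point spans $\ker J$ (written explicitly through the $15 \times 15$ adjugate cofactors of $J$) and takes $x$ to be an integral curve; by construction $x$ stays on $V_\R$. Set $\Phi(t) := M(x(t))$. This is real analytic, and non-constant because $\dot x(0) \neq 0$ and the parameterisation $x \mapsto M(x)$ is an injective linear map into $\R^{3\times 12}$.

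It remains to check that $\Phi(t)$ is the coordinate matrix of a $\langle d \rangle$-invariant icosahedron for all but finitely many $t$. Positive semidefiniteness and rank at most $3$ of the Gram matrix $\Phi(t)^{tr} \Phi(t)$ are automatic from $\Phi(t) \in \R^{3 \times 12}$; $\langle d \rangle$-invariance is built into the parameterisation $M(\,\cdot\,)$; the edge-length conditions hold since $x(t) \in V_\R$. The only genericity condition to verify is distinctness of the $12$ columns of $\Phi(t)$, and for this I invoke the observation in Lemma~\ref{2,1} that the locus within $V(I)$ where two vertices coincide is cut out by finitely many maximal ideals above $I$, hence is a finite subset of $V(I)$, and so meets the analytic curve $\{x(t)\}$ in at most finitely many values of $t$ — after possibly shrinking $\varepsilon$ to push any exceptional $t$ off to the boundary, and after choosing $p_0$ off the coincidence locus so that the curve is not entirely contained in it.
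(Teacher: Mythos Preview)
Your strategy is essentially the paper's: build the tangent vector field to the one-dimensional variety via cofactors of the Jacobian, integrate from a real starting point, and then clean up the finitely many degenerate $t$ using the result already recorded in Lemma~\ref{2,1}. Two points deserve comment.

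First, a technical slip: you keep all 30 edge equations, giving a $30\times 16$ Jacobian, and then speak of ``$15\times 15$ adjugate cofactors of $J$''. That construction only makes sense for a $15\times 16$ matrix. The paper first observes that among the 30 quadratics only 15 are linearly independent and works with those; you should do the same (or, equivalently, select at $p_0$ fifteen rows whose $15\times 16$ submatrix has rank $15$ and run the Laplace-expansion trick on that). With that adjustment your vector-field description coincides with the paper's $\tau$, and the rest of the argument goes through.

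Second, and more seriously, your fallback in case the Lemma~\ref{1,3} point is singular is not valid. You assert that ``the existence of such a real branch follows from the $1$-dimensional structure together with the fact that the special point is a real point of $V(I)$''. This is false in general: a real point of a complex curve can be isolated in the real locus (think of $x^2+y^2=0$, or an $A_1$ node with complex-conjugate tangent directions). Krull dimension $1$ over $\Q$ says nothing about the real dimension near a singular real point, and Puiseux parameterisations of the branches need not be real. The paper does not attempt any such fallback; it simply requires a real solution $y^0$ with $\tau(y^0)\neq 0$ (equivalently, your rank-$15$ condition) and treats this as a concrete computational check, carried out with the numerical ODE solver described after the proof. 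Your argument, like the paper's, ultimately rests on performing and passing that check; the Puiseux paragraph does not provide an alternative route and should be removed.
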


\begin{proof}
Let
\begin{equation}\label{CurveEquation}
p_1 = 0, \quad
p_2 = 0, \quad
\ldots, \quad
p_{15} = 0
\end{equation}
be the (linearly independent) quadratic equations in the unknown
entries $y_1$, $y_2$, \ldots, $y_{16}$ of the coordinate matrix $M$
taking into account the $\langle d \rangle$-symmetry (cf.\ also Section~\ref{sec:formalGramMatrices} as well as the comments about
factoring out the action of the 3-dimensional orthogonal group at the
beginning of Section~\ref{Classify} making the fibres of $M\mapsto M^{tr}M$
finite).
Let
${\rm D}p$ be the Jacobian matrix of $p = (p_1, p_2, \ldots, p_{15})$.
Due to the {\sc Laplace}-expansion for determinants, the vector
\[
\tau(y_1, \ldots, y_{16}) :=
\left(
\det {\rm D}p|_1, \,
\ldots, \,
(-1)^{i-1} \det {\rm D}p|_i, \,
\ldots, \,
-\det {\rm D}p|_{16}
\right)
\]
satisfies
\[
\tau(y_1, \ldots, y_{16}) \, ({\rm D}p)^{tr} = 0,
\]
where ${\rm D}p|_i$ is the square submatrix of ${\rm D}p$ that is obtained by omitting the $i$-th column.
Hence, for each real solution $y^0 = (y^0_1, \ldots, y^0_{16})$ of
(\ref{CurveEquation})
the evaluation of the vector $\tau$ 
at $y^0$
is tangent to the algebraic curve defined by (\ref{CurveEquation})
at the point $y^0$.
Given any such solution $y^0$ such that $\tau(y^0)$ is non-zero, we consider
the following initial value problem
for $\phi(t) = (\phi_1(t), \ldots, \phi_{16}(t))$
on some interval containing $0$:
\[
\left\{
\begin{array}{rcl}
\phi'(t) & = & \tau(\phi_1(t), \ldots, \phi_{16}(t)),\\[0.2em]
\phi(0) & = & y^0.
\end{array}
\right.
\]
By standard theorems on ordinary differential equations,
there exist $\varepsilon > 0$ and
a real analytic map $\phi: [0, \varepsilon) \to \R^{1 \times 16}$
satisfying the above initial value problem.
By substituting $\phi_i$ for $y_i$ in $M$ we obtain a candidate for
a real analytic
map $\Phi: [0, \varepsilon) \to \R^{3 \times 12}$ as required.
Note that it still needs to be checked whether the columns of $\Phi(t)$
are pairwise distinct and whether $\Phi(t)^{tr} \Phi(t)$ is a
positive semi-definite {\sc Gram}-matrix of rank 3.
As for the first condition, {\sc Groebner} basis computations (cf. also proof 
of Lemma~\ref{2,1}) show
that there are only finitely
many (real) solutions of (\ref{CurveEquation}) such that two columns of
the corresponding coordinate matrices are equal.
(Hence, even if $y^0$ is chosen as a solution of (\ref{CurveEquation})
defining a degenerate icosahedron with two coincident vertices, the
resulting $\Phi(t)$ will define an icosahedron with pairwise distinct
vertices for sufficiently small $t > 0$.)
Moreover, for sufficiently small $t > 0$
the conditions on $\Phi(t)^{tr} \Phi(t)$ follow from
the corresponding ones satisfied by $\Phi(0)^{tr} \Phi(0)$ by continuity.
Hence, by choosing a smaller $\varepsilon$ if necessary, a real analytic
map $\Phi$ as required is obtained.
Finally, since the map $t \mapsto \Phi(t)^{tr} \Phi(t)$ is not constant
and since only finitely many isometry types of icosahedra correspond to
a {\sc Gram}-matrix $\Phi(t)^{tr} \Phi(t)$, we conclude that $\Phi$ defines
infinitely many pairwise inequivalent $\langle d \rangle$-invariant
icosahedra.
\end{proof}

We used the numeric ODE solver in Maple~2017 to find an
approximate solution $\Phi$ starting with a $\langle d \rangle$-invariant
icosahedron given in terms of algebraic numbers.
The computation was carried out with a precision of $20$ digits,
resulting in approximate icosahedra with a residual error of at most
$0.14 \times 10^{-8}$ for $1000$ time steps in the interval $[0, 0.00018]$.

\section{Some geometric and combinatoric invariants}
Beyond the automorphism group of an icosahedron we propose some geometric 
invariants which might give some idea of how the triangles of the icosahedron
are arranged in 3-space. \\

\begin{definition}
1.) Let $\Delta_1, \Delta_2$ be two equiangular triangles in {\sc Euclidean} 3-space. A point $P$ in  the affine space spanned by the vertices of $\Delta_1$
and $\Delta_2$ is called \underline{central} for 
$\Delta_1, \Delta_2$ if one of the following three equivalent conditions is
satisfied:\\
i.) (Circumsphere) There is a sphere with midpoint $P$ passing through
all vertices of  $\Delta_1$ and $\Delta_2$.\\
ii.) (Insphere) There is a sphere with midpoint $P$ passing through
all the midpoints of the edges of $\Delta_1$ and $\Delta_2$.\\
iii.) (Centersphere) There is a sphere with midpoint $P$ intersecting
the convex hulls of $\Delta_1$ and $\Delta_2$ tangentially in their
centers  of their incircles. \\ 
2.) Let $X$ be an icosahedron in 3-space. A point $P$ in 3-space is called 
\underline{significant} of strength $k\geq 2$ for $X$, if there are
$k$ triangles of $X$ such that $P$ is central for any pair of them
with the same insphere.  
\end{definition}

\bn  If two  equiangular  triangles with  different  midpoints have  a
central point, it is  unique: In case the two triangles  do not lie in
parallel  planes, it  is  the intersection  of  the orthogonal  middle
lines; in  case they lie in  different parallel planes, it  is the
midpoint of  the centers of  their incircles. Note, if  the orthogonal
middle lines  of the  triangles intersect,  the point  of intersection
need not be a central point  of the two triangles.  If two equiangular
triangles share exactly  one edge, they are either coplanar  or have a
central point.  The central points of  an icosahedron can be viewed as
a  substitute of  the  midpoint  of the  regular  icosahedron, as  the
example below will demonstrate.  Since  the defining equations for the
significant points  (together with the generators of the maximal ideal
defining a  formal {\sc Gram}-matrix)  are all over the  rationals, we
have an obvious remark:

\begin{remark}
1.) If two icosahedra belong to the same formal {\sc Gram}-matrix, they have 
the same number of $k$-significant points for any $k$.\\
2.) Let $s$ be the sum of the strengths of all significant points of an icosahedron.
Then $20 \leq s$ with equality if and only if the spheres of the significant points
form a partition of the faces of the icosahedron.\\
3.) There is a map from the set of bend edges of the icosahedron to its set of spheres or
equivalently to its set of significant points, which takes the edge to the intersection of
the two lines orthogonal to the faces passing through their centers.
\end{remark}

\begin{example}
1.) The regular icosahedron and its {\sc Galois} conjugate have exactly one
significant point. It is of strength 20.\\
2.) The icosahedra with symmetry group $C_2 \times D_{10}$ have three 
significant points. These have strengths $5,5,10$. The first two fall in 
one orbit under the symmetry group. The sets of triangles with the same
central point form a partition of the set of faces of $X.$ \\
3.) Two neighbouring triangles in an icosahedron are either coplanar or
give rise to a significant point of strength at least 2. This is the simplest
way in which significant points arise. Significant points of bigger 
strength are clearly more interesting. We call a significant point trivial, 
if it is of strength 2 and its two associated triangles share an edge. \\
4.) There is a unique icosahedron with symmetry group $D_{10}$ and field 
degree $d_G=4$. Here one computes easily that there is one significant point of
strength 10, two of strength 5,  and ten trivial ones. The five
triangles associated to a significant point of strength 5 share a
common vertex. These two vertices  
have combinatorial distance 3, i.e. form a cycle of $d$, the ten
remaining triangles belong to the strength-10-point. The 10 trivial
significant points  
bind together one triangle of the 10-belt with one triangle of the two 
5-caps so that in the end each triangle belongs to exactly two significant points.\\
5.) There is a unique icosahedron with symmetry group ${C_2}^2$ and field 
degree $d_G=30$. In this case all significant points are trivial: They are in
obvious bijection with the edges of the icosahedron.\\
6.) If a face has two equal face angles $\not= \pi$ to its neighbours then it gives rise to  a
significant point of strength at least 3. In case it is not bigger we call it a
3-trivial significant point. If it has all three face angles to its neighbours equal and $\not= \pi$,
it gives rise to a significant point of strength at least 4. In case it is not bigger we call it a
4-trivial significant point.
\end{example}

\noindent
{\bf Acknowledgement}:
 The calculations were partly done with Maple\footnote{Maple is a trademark of Waterloo Maple Inc.}. We thank Bernd Schulze for helpful comments.

{\small

}

\end{document}